\DeclareFontFamily{OT1}{rsfs}{}
\DeclareFontShape{OT1}{rsfs}{n}{it}{<-> rsfs10}{}
\DeclareMathAlphabet{\mathscr}{OT1}{rsfs}{n}{it}
\newtheorem{theorem}{Theorem}[section]
\newtheorem{lemma}[theorem]{Lemma}
\newtheorem{corol}[theorem]{Corollary}
\newtheorem{prop}[theorem]{Proposition}
\theoremstyle{definition} \newtheorem{defin}[theorem]{Definition}}
\theoremstyle{remark} \newtheorem{remark}[theorem]{Remark}
\newtheorem{example}[theorem]{Example}}
\numberwithin{equation}{section}
\newcommand{\Abb}{{\mathbb{A}}}
\newcommand{\Pbb}{{\mathbb{P}}}
\newcommand{\Zbb}{{\mathbb{Z}}}
\newcommand{\cE}{{\mathscr E}}
\newcommand{\cF}{{\mathscr F}}
\newcommand{\cL}{{\mathscr L}}
\newcommand{\cO}{{\mathscr O}}
\newcommand{\pp}{p}
\newcommand{\Til}[1]{{\widetilde{#1}}}
\newcommand{\csm}{{c_{\text{SM}}}}
\newcommand{\cf}{{c_{\text{F}}}}
\newcommand{\qede}{\hfill$\lrcorner$}
\DeclareMathOperator{\rk}{rk}
\DeclareMathOperator{\codim}{codim}
\title{
Tensored Segre classes
}
\author{Paolo Aluffi}
\address{
Mathematics Department, 
Florida State University,
Tallahassee FL 32306, U.S.A.
}
\email{aluffi@math.fsu.edu}
\begin{document}

\begin{abstract}
We study a class obtained from the Segre class $s(Z,Y)$ 
of an embedding of schemes by incorporating the datum of a line 
bundle on $Z$. This class satisfies basic properties analogous to
the ordinary Segre class, but leads to remarkably simple formulas in
standard intersection-theoretic situations such as excess or 
residual intersections. We prove a formula for the behavior of this
class under linear joins, and use this formula to prove that a `Segre
zeta function' associated with ideals generated by forms of the same 
degree is a rational function. 
\end{abstract}

\maketitle


\section{Introduction}\label{sec:intro}

Segre classes of subschemes are fundamental ingredients in
Fulton-MacPherson intersection theory: the very definition of
intersection product may be given as a component of a class obtained
by capping a Segre class by the Chern class of a
bundle determined by the data (\cite[Proposition
  6.1(a)]{85k:14004}). Segre classes also have applications in the
theory of characteristic classes of singular varieties: both the {\em
  Chern-Mather\/} and the {\em Chern-Schwartz-MacPherson\/} class of a
hypersurface of a nonsingular variety may be written in terms of Segre
classes determined by the singularity subscheme of the hypersurface
(\cite[Proposition~2.2]{MR2020555}). Precisely because they carry so
much information, Segre classes are as a rule very challenging to
compute, and their manipulation often leads to overly complex formulas. 
The main goal of this note is to study a variation on the definition of 
Segre class which produces a class with essentially the same amount 
of information, but enjoying features that may simplify its computation
and often leads to much simpler expressions. For example, standard 
applications to enumerative geometry may be streamlined by the use of this 
`tensored' class; and we will use this notion to give an efficient proof of the
rationality of a {\em Segre zeta function\/} of a homogeneous ideal in
a polynomial ring, subject to the condition that the generators of the
ideal all have the same degree. Concrete applications of the class to 
intersection-theoretic computations are given in~\cite{MR3383478}, 
where several of its properties are stated without proof. The proofs of 
those properties may be found (among others) in this note.

We work over an algebraically closed field $k$. Throughout this note,
$Y$ will denote an algebraic variety over $k$,
and $Z$ will be a closed subscheme of $Y$.
Segre classes of subschemes
are defined as Segre classes of related {\em cones.\/}
Recall the definition, from~\cite[Chapter~4]{85k:14004}: for a closed
subscheme $Z$ of $Y$, with normal cone $C_ZY$, the Segre class of $Z$
in $Y$ is the class
\begin{equation}\label{eq:scdef}
s(Z,Y):= q_*\left(\sum_{i\ge 0} c_1(\cO_\Pbb(1))^i \cap [\Pbb]\right)\in A_*Z
\end{equation}
where $\Pbb=\Pbb(C_Z(Y\times \Abb^1))$ is the projectivization of the
normal cone of $Z \cong Z\times \{0\}$ in~$Y\times \Abb^1$, $q: \Pbb \to
Z$ is the projection, and $\cO_\Pbb(1)$ is the tautological line bundle on
$\Pbb$.  (The extra $\Abb^1$ factor takes care of the possibility that
$Z$ may equal $Y$.)

One motivation for the introduction of the class studied here is the observation
that there are two ingredients to the definition recalled above: the projective 
cone $\Pbb$ and the tautological line bundle $\cO_\Pbb(1)$. The 
scheme $\Pbb$ does {\em not\/} determine the line bundle~$\cO_\Pbb(1)$: 
even if $C_Z(Y\times \Abb^1)$ is a vector bundle $\cE$, twisting $\cE$ by a
line bundle $\cL$ determines an isomorphic projective bundle:
$\Pbb(\cE\otimes \cL)\cong \Pbb(\cE)$, but modifies the tautological
line bundle by a corresponding twist. In this sense the notation 
$\cO_\Pbb(1)$ is ambiguous, as different realizations of the scheme 
$\Pbb$ affect $\cO_\Pbb(1)$ by a twist by a line bundle. Implementing
this additional degree of freedom leads to classes that share many of
the standard properties of Segre classes, but are in certain situations
better behaved and easier to use.

Thus, we consider the datum of a subscheme $Z\subseteq Y$ as above,
together with a line bundle $\cL$ over $Z$.

\begin{defin}\label{def:tensored}
The {\em $\cL$-tensored} Segre class of $Z$ in $Y$ is
\[
s(Z,Y)^{\cL} := s(Z,Y) \otimes_{Y\times \Abb^1} \cL\quad.
\]
\end{defin}

This notion is essentially a particular case of the {\em
  twisted Segre operator\/} defined and studied by Steven Kleiman and
Anders Thorup in their work on mixed Buchsbaum-Rim multiplicities,
\cite[\S4]{MR1393259}; see~\S\ref{sec:tso}. The $\otimes$ operation
used in Definition~\ref{def:tensored} was introduced 
in~\cite[Definition~2]{MR96d:14004}; `tensoring' the class by 
$\cL$ essentially reproduces the effect of tensoring the tautological
line bundle $\cO_\Pbb(1)$ by $\cL^\vee$.
In particular, 
the ordinary Segre class $s(Z,Y)$ agrees with the class tensored 
by the trivial bundle: $s(Z,Y)=s(Z,Y)^\cO$. 
Definition~\ref{def:tensored} has the advantage that if $s(Z,Y)$ is
known, computing the tensored class does not require an explicit 
realization of the normal
cone $\Pbb$. Also, good properties of the $\otimes$ notation 
significantly help in manipulations of tensored classes. For example,
\cite[Proposition~2]{MR96d:14004} implies that
\begin{equation}\label{eq:mul}
s(Z,Y)^{\cL_1\otimes \cL_2} = s(Z,Y)^{\cL_1}\otimes_{Y\times \Abb^1}\cL_2
\end{equation}
for all line bundles $\cL_1$, $\cL_2$ on $Z$.

We will prove several basic properties of tensored classes:
\begin{itemize}
\item[(i)] If $Z\subseteq Y$ is a regular embedding, with normal
  bundle $N_ZY$, then $s(Z,Y)^\cL = (c(\cL) c(N_ZY\otimes
  \cL))^{-1}\cap [Z]$.  In particular, if $Z=D\subseteq Y$ is a
  Cartier divisor, then $s(D,Y)^{\cO(-D)} = (1+D+D^2+ \cdots)\cap
  [D]$.
\item[(ii)] The tensored Segre class $s(Z,Y)^\cL$ is preserved by
  birational morphisms and by flat morphisms.
\item[(iii)] If $Y=V$ is a nonsingular variety, the class $c(TV\otimes
  \cL) \cap s(Z,V)^\cL$ is determined
  by $Z$ and $\cL$, and independent of $V$.
\item[(iv)] Residual intersection: Suppose $Z$ contains a Cartier
  divisor $D$ in $Y$, and $R$ is the residual scheme to $D$ in $Z$ (in
  the sense of \cite[\S9.2]{85k:14004}). Then (omitting evident push-forwards)
\[
s(Z,Y)^\cL=s(D,Y)^\cL+s(R,Y)^{\cO(D)\otimes \cL}\quad.
\]
\item[(v)] Suppose $Y\subseteq \Pbb^n$, and let $H$ be a general
  hyperplane. Then
\[
s(H\cap Z, H\cap Y)^\cL = H \cdot s(Z,Y)^\cL\quad.
\]
\end{itemize}
Several of these properties were stated without proof
in~\cite{MR3383478}, and used therein to streamline intersection-theoretic 
computations. They are analogues (and formal consequences)
of properties satisfied by ordinary Segre classes. Because of
these properties, tools normally used to compute Segre classes can be 
applied to compute tensored classes directly. For example, one may 
blow-up $Y$ along $Z$, use (i) to compute the tensored Segre class of 
the exceptional divisor, and (ii) (birational invariance) to obtain the
tensored Segre class of $Z$ in $Y$. Also note that, by (iv), the ordinary
residual formula for Segre classes takes the simple form
\begin{equation}\label{eq:resin}
s(Z,X)=s(D,X)+s(R,X)^{\cO(D)}
\end{equation}
and by (i) and (iv),
\[
s(Z,X)^{\cO(-D)} = (1+D+D^2+\cdots)\cap [D] + s(R,X)\quad.
\]
These examples illustrate the notational advantage of using tensored classes:
the reader is invited to compare~\eqref{eq:resin} with the standard formulation
of the residual formula in~\cite[Proposition~9.2]{85k:14004}.

Tensored Segre classes arise naturally in enumerative geometry. A
template situation in characteristic~$0$ may be as follows. Consider 
the linear system of
hypersurfaces of $\Pbb^n$ of degree $d$ containing a given scheme
$Z$. For $k=0,\dots, n$ we may ask for the number $N_k$ of points of
intersection of $k$ general such hypersurfaces and $n-k$ general
hyperplanes, {\em in the complement of $Z$.\/} (By Bertini's theorem, 
these intersection points will count with 
multiplicity~$1$.) We will prove:

\begin{theorem}\label{thm:enum}
With notation as above,
\begin{equation}\label{eqn:enum}
\iota_*\left(s(Z,\Pbb^n)^{\cO(-d)}\right) = \sum_{k=0}^n (d^k-N_k) [\Pbb^{n-k}]
\end{equation}
where $\iota: Z\hookrightarrow \Pbb^n$ is the inclusion.
\end{theorem}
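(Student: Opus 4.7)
The plan is to determine the $[\Pbb^{n-k}]$-coefficient of $\iota_*\bigl(s(Z, \Pbb^n)^{\cO(-d)}\bigr)$ for each $k$ and match it with $d^k - N_k$. The strategy splits into three steps.

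First I would reduce to a general $\Pbb^k \subset \Pbb^n$ via property~(v). Pick $n-k$ general hyperplanes $H_1, \ldots, H_{n-k}$, let $L := H_1 \cap \cdots \cap H_{n-k} \cong \Pbb^k$, and set $W := Z \cap L$. Iterating property~(v) gives
\[
s(W, L)^{\cO(-d)} = H_1 \cdots H_{n-k} \cdot s(Z, \Pbb^n)^{\cO(-d)}.
\]
Writing $\iota_*\bigl(s(Z, \Pbb^n)^{\cO(-d)}\bigr) = \sum_j c_j h^j$ in $A_*(\Pbb^n) = \Zbb[h]/h^{n+1}$ and intersecting with $h^{n-k}$, only the $j = k$ term contributes in dimension zero, so
\[
c_k = \int_L s(W, L)^{\cO(-d)},
\]
the degree of the $0$-dimensional component of the pushforward of the left-hand side to $L$.

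Next I would compute $d^k - N_k$ via excess intersection in $L$. Let $X_1, \ldots, X_k$ be $k$ general degree-$d$ hypersurfaces of $\Pbb^n$ containing $Z$; their restrictions to $L$ are degree-$d$ hypersurfaces of $L$ containing $W$, and Bertini (in characteristic $0$) ensures that the scheme-theoretic intersection $X_1 \cap \cdots \cap X_k \cap L$ is $W$ together with $N_k$ reduced points in $L \setminus W$. By Bezout, the total intersection class is $d^k [\mathrm{pt}]$ in $A_0(L)$. Viewing the $X_i$'s as sections of $\cO_L(d)^{\oplus k}$, the refined intersection formula of \cite[\S6, \S9.1]{85k:14004} identifies the distinguished contribution of $W$ as $\{c(\cO_L(d)^{\oplus k}) \cap s(W, L)\}_0$; the $N_k$ reduced residual points contribute $N_k[\mathrm{pt}]$. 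Therefore
\[
d^k - N_k = \int_L c(\cO_L(d)^{\oplus k}) \cap s(W, L).
\]

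The remaining task is the identity $\int_L c(\cO_L(d)^{\oplus k}) \cap s(W, L) = \int_L s(W, L)^{\cO(-d)}$. Expanding $s(W, L) = \sum_i \sigma_i h^i$ in $A_*(L)$, the left side is the $h^k$-coefficient of $(1+dh)^k \sum_i \sigma_i h^i$, namely $\sum_i \binom{k}{i} d^{k-i} \sigma_i$. For the right side, the $\otimes$ operation of Definition~\ref{def:tensored} (codimension measured in $L \times \Abb^1$, the convention that recovers property~(i)) sends the codim-$i$-in-$L$ piece $\sigma_i h^i$ to $\sigma_i h^i/(1-dh)^{i+1}$, whose $h^k$-coefficient is $\binom{i + (k-i)}{k-i} d^{k-i}\sigma_i = \binom{k}{i} d^{k-i}\sigma_i$. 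The sums agree. Conceptually this reflects the triviality of $\cO(d) \otimes \cO(-d) = \cO$, whereby tensoring by $\cO(-d)$ absorbs the Chern class contribution of $\cO(d)^{\oplus k}$; the main obstacle is tracking the codimension shift coming from the $\Abb^1$ factor so that the two binomial expressions actually coincide.
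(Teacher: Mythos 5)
Your steps 1 and 3 are fine, and in fact they retrace the paper's own reductions (property (v) to slice down to $L\cong\Pbb^k$, and the binomial identity, which is Lemma~\ref{lem:inde}/Proposition~\ref{prop:fminv} in the special case where the twist by $\cO(-d)$ trivializes $c(\cO(d)^{\oplus k})$). The gap is in step 2, at the sentence ``Bertini ensures that the scheme-theoretic intersection $X_1\cap\cdots\cap X_k\cap L$ is $W$ together with $N_k$ reduced points.'' Bertini controls the intersection \emph{away} from the base locus of the restricted system, but it says nothing about the scheme structure along $W$, and the claim is false in general: take $Z=V(x^2,xy,y^2)\subset\Pbb^2$, $d=2$, $k=n=2$. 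The linear system is spanned by $x^2,xy,y^2$, and two general members cut out a length-$4$ complete intersection at the point, which strictly contains the length-$3$ scheme $W=Z$. In general, $k$ general members cannot cut out $W$ near a point where its ideal needs more than $k$ generators.

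What the Fulton--MacPherson canonical decomposition actually gives is the contribution $\{c(\cO_L(d)^{\oplus k})\cap s(W_-,L)\}_0$, where $W_-$ is the union of connected components of $X_1\cap\cdots\cap X_k\cap L$ supported on $W$ --- a scheme containing $W$ but in general different from it. To finish your argument you need $s(W_-,L)=s(W,L)$ (at least in dimension $0$), and Segre classes are not determined by supports, so this does not follow from anything you have said; it is exactly the nontrivial input the paper imports as \cite[Theorem~1.1(b)]{howmany} in the proof of Theorem~\ref{thm:ls} (of which Theorem~\ref{thm:enum} is the case $Y=\Pbb^n$, $\cL=\cO(d)$). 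So your route is essentially the paper's, but the key identification $d^k-N_k=\int_L c(\cO_L(d)^{\oplus k})\cap s(W,L)$ is unjustified as written; either invoke that result or supply an independent proof that general members of a linear system cutting out $W$ produce a scheme with the same Segre class along $W$.
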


For example, the problem of computing characteristic numbers of degree-$r$ 
plane curves fits this template: the projective space $\Pbb^n$, with $n=r(r+3)/2$, 
parametrizes degree-$r$
plane curves; the linear system is spanned by the hypersurfaces of degree 
$d=2r-2$ parametrizing curves that are tangent to lines; and $Z\subseteq \Pbb^n$ 
is a scheme supported on the set of non-reduced curves. In this case, the 
numbers $N_k$ are the `characteristic numbers' of the family of degree~$r$ plane 
curves. (They are known for $r\le 4$, \cite{MR1718648}; the problem of their 
computation is completely open for $r\ge 5$.) It is well known that the problem
is equivalent to the problem of computing the Segre class $s(Z,\Pbb^n)$; 
Theorem~\ref{thm:enum} makes this fact completely transparent.

By the same token, Theorem~\ref{thm:enum} can be used to compute 
Segre classes, in low dimension: if the hypersurfaces are general elements of the linear
system of hypersurfaces of degree~$d$ containing a given scheme $Z$, 
then a computer algebra system can be used to evaluate the numbers $N_k$, 
giving $\iota_*\left(s(Z,\Pbb^n)^{\cO(-d)}\right)$ by~\eqref{eqn:enum}, and
$\iota_* s(Z,\Pbb^n)$ may then be obtained by tensoring by $\cO(d)$, 
making use of~\eqref{eq:mul}. This strategy is reminiscent of the algorithm (via residual
schemes) introduced by Eklund, Jost, Peterson (\cite{EJP}).
An example will be given in~\S\ref{sec:equiv} (Example~\ref{ex:expco}).

We also note the following consequence of Theorem~\ref{thm:enum}.
\begin{corol}\label{cor:constraint}
Assume $Z\subseteq \Pbb^n$ may be defined by an ideal generated by polynomials of degree $d$.
Then $s(Z,\Pbb^n)^{\cO(-d)}$ is effective.
\end{corol}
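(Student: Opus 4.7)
The plan is to deduce Corollary~\ref{cor:constraint} as an immediate consequence of Theorem~\ref{thm:enum}. That theorem asserts
\[
\iota_*\bigl(s(Z,\Pbb^n)^{\cO(-d)}\bigr)=\sum_{k=0}^n (d^k-N_k)\,[\Pbb^{n-k}],
\]
so effectivity reduces to verifying the inequality $d^k\ge N_k$ for every $k=0,\dots,n$.

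The first step is to unpack the hypothesis. Since the ideal of $Z$ is generated by forms of degree $d$, the linear system of degree-$d$ hypersurfaces containing $Z$ is base-point-free on $\Pbb^n\smallsetminus Z$, so Bertini (in characteristic~$0$, as in the setup of Theorem~\ref{thm:enum}) applies there: $k$ general members of this system together with $n-k$ general hyperplanes meet transversely away from $Z$, producing exactly $N_k$ reduced intersection points.

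The second step is a B\'ezout comparison. The intersection product of $k$ hypersurfaces of degree $d$ and $n-k$ hyperplanes has total degree $d^k$ in $\Pbb^n$. Of this count, the $N_k$ reduced points off $Z$ contribute $N_k$, so the remainder $d^k-N_k$ is the contribution carried by the scheme-theoretic intersection along $Z\cap\Pbb^{n-k}$. When $k\ge\dim Z$ this is immediate: $Z\cap\Pbb^{n-k}$ is zero-dimensional (or empty) and $d^k-N_k$ is a finite sum of ordinary local intersection multiplicities, which are non-negative.

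The main obstacle is proving $d^k-N_k\ge 0$ in the excess case $k<\dim Z$, where $Z\cap\Pbb^{n-k}$ is positive-dimensional and one cannot count points directly. The natural route is to invoke Fulton's refined intersection formula~\cite[Ch.~6]{85k:14004}, which identifies $d^k-N_k$ with the degree of an excess intersection class supported on $Z\cap\Pbb^{n-k}$; positivity of this class reflects the fact that the excess Chern contribution comes from globally generated line bundles ($\cO(d)$ and $\cO(1)$, whose sections generate the relevant sheaves precisely because $Z$ is cut out by degree-$d$ forms). Either way, $d^k\ge N_k$, and~\eqref{eqn:enum} then exhibits $\iota_*(s(Z,\Pbb^n)^{\cO(-d)})$ as a non-negative combination of the $[\Pbb^{n-k}]$, proving the corollary.
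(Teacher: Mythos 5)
Your overall route is the same as the paper's: Corollary~\ref{cor:constraint} is deduced from Theorem~\ref{thm:enum}, so everything reduces to the inequality $d^k\ge N_k$ for all $k$. The first half of your argument (Bertini in characteristic $0$ giving reduced points off $Z$, B\'ezout giving total degree $d^k$, and the identification of $d^k-N_k$ with the degree of the contribution supported on $Z\cap\Pbb^{n-k}$) is fine and matches the intended reading.

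The gap is in the excess case, which is the only case with real content. You assert that the excess contribution is nonnegative because ``the excess Chern contribution comes from globally generated line bundles.'' Taken literally this does not prove anything: the contribution of a distinguished variety is of the form $\{c(E)\cap s(W,\cdot)\}$, and while $c(E)$ comes from globally generated bundles, the Segre class factor is not effective in general --- indeed Example~\ref{ex:effe} of this very paper ($s(Z,\Pbb^5)=4[\Pbb^2]-18[\Pbb^1]+51[\Pbb^0]$ for the Veronese surface) shows that ``positive Chern class times Segre class'' has no formal positivity. The statement you need is Fulton's positivity theorem for intersection products in projective space, \cite[Theorem~12.3]{85k:14004}, whose proof is a genuine argument from Chapter~12 (the normal cone embeds in a globally generated bundle, and canonical classes of cones in globally generated bundles on projective schemes are represented by nonnegative cycles); it is not a consequence of the Chapter~6 refined intersection formula alone. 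The paper's proof is exactly this one-line citation: Theorem~\ref{thm:enum} plus \cite[Theorem~12.3]{85k:14004} give $d^k-N_k\ge 0$, hence effectivity. So your proposal is repaired simply by replacing the heuristic positivity claim with that reference (or with a proof of the cone positivity statement), but as written the key inequality is asserted rather than proved.
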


\begin{example}\label{ex:effe}
Let $Z$ be the Veronese surface in $\Pbb^5$. Then $\iota_* s(Z,\Pbb^5)
= 4[\Pbb^2] -18[\Pbb^1] +51[\Pbb^0]$ is not effective, but as the Veronese
surface is cut out by quadrics,
\[
\iota_* s(Z,\Pbb^5)^{\cO(-2)} = 4[\Pbb^2] +14 [\Pbb^1] + 31 [\Pbb^0]
\]
is effective.
\qede\end{example} 
Ampleness considerations imply that the class $(1+dH)^{n+1} s(Z,\Pbb^n)$ 
is effective.
Corollary~\ref{cor:constraint} also implies this fact, as we note 
in~\S\ref{sec:equiv}; hence it is a stronger constraint.
Further constraints on the degrees of the components of $s(Z,\Pbb^n)^{\cO(-d)}$
may be derived from Theorem~\ref{thm:enum} by applying a theorem of
June Huh; see~Remark~\ref{rem:Huh}.

Theorem~\ref{thm:enum} will follow from a re-writing of
Fulton-MacPherson intersection product for the intersection of a
collection of linearly equivalent effective Cartier divisors
$X_1,\dots, X_m$ in a variety $V$.  Let $\cO(X)$ be the (common) line
bundle of these divisors, and assume that $Z$ is a union of connected
components of the intersection $X_1\cap\cdots \cap X_m$.

\begin{theorem}\label{thm:equiv}
The contribution of $Z$ to the intersection product $X_1\cdots X_m$
equals the component of dimension $\dim V-m$ in $s(Z,V)^{\cO(-X)}$.
\end{theorem}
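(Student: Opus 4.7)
The plan is to express the contribution of $Z$ to $X_1\cdots X_m$ using the Fulton--MacPherson refined intersection formula for a section of a direct sum of line bundles, and then to recognize the result as the dimension-$(\dim V - m)$ component of $s(Z,V)^{\cO(-X)}$ by a short combinatorial computation.

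For the first part, I would let $W := X_1\cap\cdots\cap X_m$ scheme-theoretically, so that $W$ is the zero scheme of the section $\sigma := (s_1,\ldots,s_m)$ of $E := \bigoplus_{i=1}^m \cO_V(X_i)$, where $s_i$ is the defining section of~$X_i$. The refined intersection formula for the zero section of~$E$ meeting~$\sigma$ (\cite[Proposition~6.1(a)]{85k:14004}, applied to the zero-section imbedding $V\hookrightarrow E$) gives
\[
X_1\cdots X_m \;=\; \bigl\{\,c(E|_W)\cap s(W,V)\,\bigr\}_{\dim V - m}\ \in\ A_{\dim V - m}(W).
\]
Because $Z$ is a union of connected components of~$W$, we have $A_*W = A_*Z\oplus A_*(W\smallsetminus Z)$ and $s(W,V)$ restricts to $s(Z,V)$ on the summand $A_*Z$, so the contribution of~$Z$ equals $\{c(E|_Z)\cap s(Z,V)\}_{\dim V - m}$. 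The linear-equivalence hypothesis then gives $E|_Z\cong \cO(X)|_Z^{\oplus m}$ and $c(E|_Z) = (1+X)^m$ (with $X := c_1(\cO(X))$); thus the contribution of~$Z$ is $\{(1+X)^m\cap s(Z,V)\}_{\dim V - m}$.

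For the second part, I would write $s(Z,V) = \sum_j \beta_j$ with $\beta_j$ of codimension~$j$ in~$V$, equivalently of codimension~$j+1$ in the ambient $V\times \Abb^1$ used in Definition~\ref{def:tensored}. The $\otimes$-operation of \cite[Definition~2]{MR96d:14004} then yields
\[
s(Z,V)^{\cO(-X)} \;=\; \sum_j \frac{\beta_j}{(1-X)^{j+1}}.
\]
The codimension-$m$-in-$V$ component of this sum equals $\sum_{j\le m} \binom{m}{j}\, X^{m-j}\beta_j$, using the binomial identity $[X^{m-j}](1-X)^{-(j+1)} = \binom{j+(m-j)}{m-j} = \binom{m}{j}$. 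On the other hand, the codimension-$m$-in-$V$ component of $(1+X)^m\cap s(Z,V) = \sum_{j,k}\binom{m}{k}\, X^k\beta_j$ is obtained by collecting the $k+j=m$ terms, yielding the same expression. Combining with the first part completes the argument.

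The main subtlety lies in matching the two expansions: the operator $\otimes_{V\times \Abb^1}$ shifts codimension degrees by one (because $V$ is a divisor in $V\times \Abb^1$), so the denominator exponent is $j+1$ rather than~$j$; this shift is precisely what produces the coefficient $\binom{m}{j}$ and hence the agreement with the binomial expansion of $(1+X)^m$. The remaining steps are routine applications of Fulton's refined intersection formula and of linear equivalence.
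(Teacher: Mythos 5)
Your argument is correct, and its first half coincides in substance with the paper's: both reduce the statement to the canonical decomposition of \cite[\S6.1]{85k:14004}, which expresses the contribution of the connected component(s) $Z$ as $\{(1+X)^m\cap s(Z,V)\}_{\dim V-m}$, since the relevant normal bundle restricts to $\cO(X)^{\oplus m}|_Z$. (You phrase the product via the zero scheme of a section of $E=\bigoplus_i\cO(X_i)$ rather than via the diagonal embedding used in the paper's definition; these give the same refined class, and in any case the \S6.1 formula produces the identical expression, so this is only a cosmetic difference worth a one-line remark.) Where you genuinely diverge is the second half: you verify directly, by expanding $s(Z,V)=\sum_j\beta_j$ and matching the coefficient $\binom{m}{j}$ of $X^{m-j}\beta_j$ on both sides, that the codimension-$m$ piece of $(1+X)^m\cap s(Z,V)$ equals that of $\sum_j(1-X)^{-(j+1)}\cap\beta_j=s(Z,V)^{\cO(-X)}$; your bookkeeping of the exponent $j+1$ (codimension in $V\times\Abb^1$) and the identity $[X^{m-j}](1-X)^{-(j+1)}=\binom{m}{j}$ is right. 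The paper instead proves the more general Proposition~\ref{prop:fminv} --- the contribution equals $\{c(g^*N\otimes\cL)\cap s(Z,V)^{\cL}\}_{\dim V-m}$ for \emph{every} line bundle $\cL$ --- via Lemma~\ref{lem:inde} and the twisting formula of \cite[Proposition~1]{MR96d:14004}, and then specializes to $\cL=\cO(-X)$, where the twisted bundle trivializes. Your computation is more elementary and self-contained, but it is tailored to the case of a trivial twisted normal bundle; the paper's route buys the $\cL$-independence statement, which is the conceptual point emphasized after Proposition~\ref{prop:fminv} and is reused elsewhere (e.g., in Theorem~\ref{thm:ls}).
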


This result is a formal consequence of known formulae
(cf.~\cite[Example 6.1]{85k:14004}) and of properties of the $\otimes$
operation from \cite[\S2]{MR96d:14004}. The reason we find
Theorem~\ref{thm:equiv} remarkable is that the class
$s(Z,V)^{\cO(-X)}$ {\em does not depend\/} on the number $m$ of
hypersurfaces: if $Z$ is a collection of components of the
intersection of more hypersurfaces from the same linear system, then
its contribution to the corresponding intersection product is simply
evaluated by terms of higher codimension in {\em the same\/} class
$s(Z,V)^{\cO(-X)}$. In fact, we can show (Theorem~\ref{thm:ls}) that
the contribution supported on subvarieties of $Z$ for the intersection 
product of {\em
  any\/} number of general elements of the linear system equals the
term of appropriate dimension in $s(Z,V)^{\cO(-X)}$. This fact is
responsible for the particularly simple form taken by
Theorem~\ref{thm:enum}.\smallskip

A similar notational advantage occurs in computing Segre classes of
joins with linear subspaces in projective space.  Let $Z$ be a
subscheme of $\Pbb^n$; we may assume $Z$ is defined by a (possibly
non-saturated) ideal generated by homogeneous polynomials $F_1,\dots,
F_m$ {\em of the same degree~$d$\/} in $k[x_0,\dots, x_n]$. For $N\ge
n$, we consider the subscheme $Z^{(d)}_N$ of $\Pbb^N$ defined by the
ideal generated by the $F_i$, viewed as polynomials in $k[x_0,\dots,
  x_N]$. Geometrically, $Z^{(d)}_N$ is the join of
$Z\subseteq\Pbb^n\subseteq \Pbb^N$ and a subspace $\Pbb^m$, $m=N-n-1$
complementary to $\Pbb^n$; but the scheme structure of $Z^{(d)}_N$
along the `vertex' $\Pbb^m$ depends on the choice of the degree~$d$.

\begin{example}\label{ex:lembp}
Let $Z$ be a point in $\Pbb^1$. Then $Z^{(1)}_2$ is a reduced line in
$\Pbb^2$, while $Z^{(2)}_2$ is a line with an embedded point.
\qede\end{example}

An analogous linear join operation may also be defined at the level of Chow
groups: as above, let $\Pbb^m \subseteq\Pbb^N$ be a complementary
subspace to $\Pbb^n$; and if $W\subseteq Z\subseteq \Pbb^n$ is a
subvariety, let $W\vee \Pbb^m$ denote the cone over $W$ with vertex
along $\Pbb^m$, a subvariety of~$Z^{(d)}_N$. This correspondence
passes through rational equivalence, hence it defines a map $\alpha
\mapsto \alpha \vee \Pbb^m$ from $A_*Z$ to~$A_* Z^{(d)}_N$.

\begin{theorem}\label{thm:cone}
In the situation detailed above (in particular, with $N=n+m+1$)
\begin{equation}\label{eq:coneform}
s(Z^{(d)}_N,\Pbb^N)^{\cO(-dH)} = \frac{d^{n+1}[\Pbb^m]}{1-dH} +
s(Z,\Pbb^n)^{\cO(-dH)} \vee \Pbb^m\quad.
\end{equation}
\end{theorem}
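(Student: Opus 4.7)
The plan is to blow up $\Pbb^N$ along the vertex $\Pbb^m$ and apply the residual formula (property~(iv)) to split the left-hand side of~\eqref{eq:coneform} into two pieces matching the two summands on the right. Let $\beta\colon W\to \Pbb^N$ be the blowup of $\Pbb^N$ along $\Pbb^m$, with exceptional divisor $E_0$. Since $\Pbb^m$ has codimension $n+1$ in $\Pbb^N$, the linear projection from $\Pbb^m$ extends to a morphism $\pi\colon W\to \Pbb^n$ realizing $W$ as a $\Pbb^{m+1}$-bundle over $\Pbb^n$; under the identification $E_0\cong \Pbb^m\times \Pbb^n$, the restrictions $\beta|_{E_0}$ and $\pi|_{E_0}$ are the two projections, and I write $h_m,h_n$ for their pulled-back hyperplane classes. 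Pulling back a hyperplane of $\Pbb^N$ containing $\Pbb^m$ yields the divisor relation $\beta^*H = \pi^*H + E_0$ on $W$ (with $H$ for hyperplane classes throughout); restricted to $E_0$ this gives $E_0|_{E_0} = h_m - h_n$.

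Because each $F_j$ is a form of degree $d$ in $x_0,\dots,x_n$ alone, $\beta^*F_j$ vanishes along $E_0$ to order exactly $d$ and factors as $\beta^*F_j = e^d\cdot \Til F_j$, where $e$ is a local equation of $E_0$ and $\Til F_j = \pi^*F_j$ is a global section of $\pi^*\cO_{\Pbb^n}(d)$. Consequently $\beta^{-1}(Z^{(d)}_N)$ has ideal $(e^d)\cdot(\Til F_1,\dots,\Til F_m)$: it contains the Cartier divisor $dE_0$, and its residual scheme is $\Til Z := \pi^{-1}(Z)$. The crucial algebraic observation is the twist cancellation
\[
\cO(dE_0)\otimes \beta^*\cO(-dH) = \pi^*\cO(-dH),
\]
which follows immediately from $\beta^*\cO(H) \cong \pi^*\cO(H)\otimes \cO(E_0)$. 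Applying property~(iv) with $\cL = \beta^*\cO(-dH)$, then pushing forward by $\beta$ and using birational invariance (part of property~(ii)), converts $s(Z^{(d)}_N,\Pbb^N)^{\cO(-dH)}$ into
\[
\beta_*\,s(dE_0,W)^{\beta^*\cO(-dH)} \;+\; \beta_*\,s(\Til Z,W)^{\pi^*\cO(-dH)}.
\]

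For the second summand, flat invariance (also part of~(ii)) gives $s(\Til Z,W)^{\pi^*\cO(-dH)} = \pi^* s(Z,\Pbb^n)^{\cO(-dH)}$. Geometrically, for any subvariety $T\subseteq \Pbb^n$ the subvariety $\pi^{-1}(T)\subseteq W$ is mapped by $\beta$ birationally onto the join $T\vee \Pbb^m\subseteq \Pbb^N$, so $\beta_*\pi^*\alpha = \alpha\vee \Pbb^m$ by linearity, producing the term $s(Z,\Pbb^n)^{\cO(-dH)}\vee \Pbb^m$. For the first summand, property~(i) applied to the Cartier divisor $dE_0\subseteq W$ reads $s(dE_0,W)^{\cL} = [dE_0]/(c(\cL)\,c(\cO(dE_0)\otimes \cL))$; combined with $\beta^*H|_{E_0}=h_m$ and $E_0|_{E_0}=h_m-h_n$, evaluating at $\cL=\beta^*\cO(-dH)$ gives
\[
s(dE_0,W)^{\beta^*\cO(-dH)} = \frac{d\,[E_0]}{(1-dh_m)(1-dh_n)}.
\]
Expanding as a geometric series and pushing forward along $\beta|_{E_0}\colon \Pbb^m\times \Pbb^n\to \Pbb^m$ kills every summand of $h_n$-degree less than $n$ (by dimensional reasons), leaving exactly $d^{n+1}[\Pbb^m]/(1-dH)$, the first term of~\eqref{eq:coneform}.

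The central technical obstacle is the careful bookkeeping on the blowup $W$: verifying the line-bundle relation $\beta^*\cO(H) \cong \pi^*\cO(H)\otimes \cO(E_0)$ and the self-intersection $E_0|_{E_0}=h_m-h_n$. These identities are what force the key twist cancellation used above, collapsing the residual contribution into a pure $\pi$-pullback from $\Pbb^n$; once this is in place, the rest of the argument is a formal consequence of properties~(i), (ii), and~(iv).
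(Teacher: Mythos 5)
Your proposal is correct and follows essentially the same route as the paper: blow up along the vertex, observe that the pullback of $Z^{(d)}_N$ is $dE_0$ plus the residual $\pi^{-1}(Z)$, apply the residual formula (iv) with $\cL=\cO(-dH)$ so the twist cancellation $\cO(dE_0)\otimes\beta^*\cO(-dH)\cong\pi^*\cO(-dH)$ collapses the residual term, and finish with birational and flat invariance. The only (harmless) difference is in evaluating the exceptional term: you compute directly on $E_0\cong\Pbb^m\times\Pbb^n$ using $E_0|_{E_0}=h_m-h_n$, whereas the paper pushes forward $d[E]/(1+dE)$ to $\Pbb^m$ via birational invariance of ordinary Segre classes and then tensors; both yield $d^{n+1}[\Pbb^m]/(1-dH)$.
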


A `relative' version of Theorem~\ref{thm:cone}, proven here in 
Theorem~\ref{thm:conep}, is used in computations carried out
in~\cite{MR3383478}.
Another reason that motivates our interest in Theorem~\ref{thm:cone} 
is that this statement implies that the push-forward of the ordinary
Segre class to $\Pbb^N$ is of the form
\[
\iota_{N*} s(Z^{(d)}_N,\Pbb^N) = \frac{A(H)}{(1+dH)^{n+1}}\cap [\Pbb^N]
\]
where $A(H)$ is a certain polynomial {\em independent of $N$\/} and
with nonnegative coefficients (Theorem~\ref{thm:rf}). This is a particular 
case of the rationality of a `Segre zeta function' which may be associated 
with any homogeneous ideal $I$ of a polynomial ring; the case proven
here is the case in which all generators of $I$ have the same degree
(or, more generally, are elements of a linear system in a suitable relative
setting, cf.~Theorem~\ref{thm:conep}).
This zeta function appears to be quite interesting. For example, its poles 
record the degree of some, but in general not all, the elements of a minimal 
generating set of the ideal. The general case of this rationality statement 
will be discussed elsewhere.

The basic properties of tensored Segre classes are proven
in~\S\ref{sec:bp}.  Theorem~\ref{thm:equiv} is proven
in~\S\ref{sec:equiv}, together with its enumeratively-inspired
consequence Theorem~\ref{thm:enum}. The `relative' generalization of
Theorem~\ref{thm:cone} and the rationality of the Segre zeta 
function in the particular case considered here are 
discussed in~\S\ref{sec:cone}.\smallskip

As mentioned above, the class considered here may be viewed as an
application of the {\em twisted Segre operator\/} defined by
S.~Kleiman and A.~Thorup in~\cite{MR1393259}; see~\S\ref{sec:tso}. 
Properties (ii) and (iv) 
listed above are particular cases of more general statements for these
operators ((a), (b) in section (4.4), and Theorem~4.6 in \cite{MR1393259}, 
respectively). While `twisted Segre classes' may have been a natural choice
for the name of the classes considered here, we opted for `tensored'
for consistency with the terminology used in~\cite{MR3383478} and since
the term `twisted Segre class' is used in a different context in~\cite{MR2220087}.

Leendert van Gastel also considered classes 
defined similarly: one can interpret Corollary~3.6 in~\cite{MR1079843} 
as showing that the class of the {\em Vogel cycle\/} may be expressed 
as a tensored Segre class, up to multiplication by the Chern class of a 
line bundle. It would be interesting to compare this result with 
Theorem~\ref{thm:enum}. 

Finally, we note
that the Chern-Schwartz-MacPherson class of a hypersurface $X$ in a
nonsingular variety $M$ may be written as
\begin{equation}\label{eq:CSM}
\csm(X) = c(TM)\cap \left( s(X,M) + (s(JX,M)^{\cO(-X)})^\vee\right)
\end{equation}
where $JX$ denotes the {\em singularity subscheme\/} of $X$, and
$(\cdot)^\vee$ is the operation that changes the sign of the
components of the class $(\cdot)$ which have odd codimension in $M$.
This follows immediately from the definition and from
\cite[Theorem~I.4]{MR2001i:14009}.
\smallskip

{\em Acknowledgments.} This work was supported in part by the Simons foundation
and by NSA grants H98230-15-1-0027 and  H98230-16-1-0016.  The author is grateful 
to Caltech for hospitality while this work was carried out.


\section{Basic properties}\label{sec:bp}
As in the introduction, $Y$ denotes a variety over an algebraically
closed field $k$, embeddable in a nonsingular scheme.  (By
\cite[Lemma~4.2]{85k:14004}, the material extends without substantial
changes to the case in which $Y$ is a pure-dimensional $k$-scheme.)
In this section we prove the basic properties of tensored Segre
classes listed in the introduction.

We recall the definition of the tensored classes: for a closed
embedding $\iota: Z\hookrightarrow Y$, and for a line bundle $\cL$ on
$Z$, we let
\[
s(Z,Y)^\cL = s(Z,Y)\otimes_{Y\times \Abb^1} \cL\quad.
\]
Here we identify $Z$ with $Z\times \{0\}\subseteq Y\times \Abb^1$. The
$\otimes$ notation, borrowed from~\cite[\S2]{MR96d:14004}, acts on a
class $\alpha_k$ in $A_kZ$ by
\begin{equation}\label{eq:tende}
\alpha_k \otimes_{Y\times \Abb^1} \cL := c(\cL)^{-(\dim Y+1-k)} \cap \alpha_k\quad,
\end{equation}
and this definition is extended by linearity to the whole Chow group $A_*Z$.  Properties
of this operation are proven in~\cite[\S2]{MR96d:14004}. Recall that
$\alpha \mapsto \alpha\otimes_{Y\times \Abb^1} \cL$ defines an action
of Pic on $A_*Z$; in particular, $\alpha= (\alpha\otimes_{Y\times
  \Abb^1} \cL)\otimes_{Y\times \Abb^1} \cL^\vee$.  Hence, the ordinary
Segre class may be recovered from a tensored one by tensoring by the
dual line bundle:
\[
s(Z,Y) = s(Z,Y)^\cL \otimes_{Y\times \Abb^1} \cL^\vee\quad.
\]
We note that $s(Z,Y)^\cO=s(Z,Y)$. Also, $s(Z,Z)^\cL = c(\cL)^{-1}\cap [Z]$
if $Z$ is pure-dimensional.

\subsection{Twisted Segre operators}\label{sec:tso}
As mentioned in~\S\ref{sec:intro}, $s(Z,Y)^\cL$ may be expressed in
terms of the {\em twisted Segre operator\/} of~\cite{MR1393259}: with
notation as in~\cite[\S4.4]{MR1393259},
\[
s(Z,Y)^{\cL} = s(Z,\cL^\vee)[Y]\quad.
\]
Indeed, according to~\cite[(4.4.2)]{MR1393259} and with notation as
in~\S\ref{sec:intro},
\[
s(Z,\cL^\vee)[Y] = q_*\left(\sum_{i\ge 0} c_1(\cO_\Pbb(1)\otimes
q^*\cL^\vee)^i \cap [\Pbb]\right) =q_*\left( c(\cO_\Pbb(-1)\otimes
q^*\cL)^{-1}\cap [\Pbb]\right)
\]
(cf.~\eqref{eq:scdef}). By~\cite[Proposition~2]{MR96d:14004}, and
denoting by $\hat Y$ the blow-up of $Y\times \Abb^1$ along $Z\times
\{0\}$ (so that $[\Pbb]$ is a divisor in $\hat Y$),
\begin{align*}
q_*\left( c(\cO_\Pbb(-1)\otimes q^*\cL)^{-1}\cap [\Pbb]\right)
& =q_*\left( [\Pbb]\otimes_{\hat Y} (\cO_\Pbb(-1)\otimes q^*\cL)\right) \\
& =q_*\left( ([\Pbb]\otimes_{\hat Y}\cO_\Pbb(-1))\otimes_{\hat Y} q^*\cL\right) \\
& =q_*\left( ([\Pbb]\otimes_{\hat Y}\cO_\Pbb(-1))\right)\otimes_{Y\times \Abb^1} \cL \\
& =s(Z,Y)\otimes_{Y\times \Abb^1} \cL \\
& =s(Z,Y)^\cL\quad.
\end{align*}

\subsection{Regular embeddings}\label{sec:re}
\begin{itemize}
\item[(i)] If $Z\subseteq Y$ is a regular embedding, with normal
  bundle $N_ZY$, then
\begin{equation}\label{eq:regem}
s(Z,Y)^\cL = (c(\cL) c(N_ZY\otimes \cL))^{-1}\cap [Z]\quad.
\end{equation}
\end{itemize}

\begin{proof}
If $Z\subseteq Y$ is a regular embedding, then
$s(Z,Y)=c(N_ZY)^{-1}\cap [Z]$ by \cite[Proposition
  4.1(a)]{85k:14004}. Also note that in this case $Z$ is
pure-dimensional (as $Y$ is pure-dimensional by assumption).
Applying~\cite[Proposition~1]{MR96d:14004}, we obtain
\begin{align*}
s(Z,Y)^\cL & = s(Z,Y) \otimes_{Y\times \Abb^1} \cL 
= (c(N_ZY)^{-1}\cap [Z])\otimes_{Y\times \Abb^1} \cL \\
& = c(\cL)^{\codim_Z Y} c(N_ZY\otimes \cL)^{-1} \cap ([Z]\otimes_{Y\times \Abb^1} \cL) \\
& = c(\cL)^{\dim Y-\dim Z} c(N_ZY\otimes \cL)^{-1} \cap (c(\cL)^{-(\dim Y+1-\dim Z)}\cap [Z])
\end{align*}
with the stated result.
\end{proof}

\begin{example}
Let $Z$ be the complete intersection of $r$ linearly equivalent
effective Cartier divisors $X_1,\dots,X_r$; let $D$ be the common
divisor class of the hypersurfaces $X_i$. Then $N_ZY=\cO(D)^{\oplus
  r}$, hence $N_ZY\otimes \cO(-D)\cong \cO^{\oplus r}$, and
\eqref{eq:regem}~gives
\[
s(Z,Y)^{\cO(-D)} = c(\cO(-D))^{-1} \cap [Z] = (1 + D + D^2 + \cdots )\cap [Z]
=s(Z,Z)^{\cO(-D)}\quad,
\]
independently of $r$.
In particular, if $Z=D$ is a Cartier divisor, then $s(D,Y)^{\cO(-D)} =
s(D,D)^{\cO(-D)}=D+D^2 + D^3 + \cdots$ as stated in~\S\ref{sec:intro}.
\qede\end{example}

\subsection{Behavior under morphisms}\label{sec:bi}
\begin{itemize}
\item[(ii)] Let $\pi: Y' \to Y$ be a morphism of varieties, let 
$\rho:\pi^{-1}(Z)\to Z$ be the induced morphism, and $\Til\cL=\rho^*\cL$.
Then
\begin{itemize}
\item If $\pi$ is proper and onto, then $\rho_*(s(\pi^{-1}(Z),Y')^{\Til\cL})=\deg(Y'/Y) s(Z,Y)^\cL$.
\item If $\pi$ is flat, then $\rho^*(s(Z,Y)^\cL) = s(Z',Y')^{\Til\cL}$.
\end{itemize}
\end{itemize}

\begin{proof}
Both statements follows immediately from the analogous properties of ordinary
Segre classes, proven in\cite[Proposition~4.2(a)]{85k:14004}, and 
from the projection formula, which implies that
\[
\rho_* (\alpha \otimes_{\Til Y\times \Abb^1} \rho^*\cL) 
= \rho_*(\alpha)\otimes_{Y\times \Abb^1} \cL
\]
as is immediate from the definition of the $\otimes$ operation.
\end{proof}

For the corresponding (more general) facts for twisted Segre operators,
see~\cite[(4.4)]{MR1393259}.

Note that as a consequence of the first formula, tensored Segre classes are
invariant under birational maps, in the sense that 
$\rho_*\left(s(\pi^{-1}(Z),\Til Y)^{\Til\cL}\right)=s(Z,V)^\cL$
if $\pi$ is a proper birational morphism.

\subsection{Independence of a nonsingular ambient variety}
\begin{itemize}
\item[(iii)] If $Y=V$ is a nonsingular variety, the class $c(TV|_Z\otimes \cL) 
\cap s(Z,V)^\cL$ is independent of $V$; it is determined by $Z$ and $\cL$.
\end{itemize}

In fact, we will show that
\begin{equation}\label{eq:ful}
c(TV|_Z\otimes \cL) \cap s(Z,V)^\cL = c(\cL)^{\dim V}\cap (\cf(Z)\otimes_{V\times \Abb^1} \cL)
\end{equation}
where $\cf(Z)$ is the class defined
in~\cite[Example~4.2.6]{85k:14004}. This class only depends on $Z$,
and if $\alpha_k$ is a class of dimension~$k$, then
\[
c(\cL)^{\dim V}\cap (\alpha_k \otimes_{V\times \Abb^1} \cL) 
=c(\cL)^{\dim V}\cap (c(\cL)^{-(\dim V+1-k)}\cap \alpha_k)
=c(\cL)^{k-1}\cap \alpha_k
\]
is independent of $V$, so indeed~\eqref{eq:ful} verifies~(iii).

\begin{proof}[Proof of~\eqref{eq:ful}]
By \cite[Proposition~1]{MR96d:14004},
\begin{align*}
c(\cL)^{\dim V}& \cap (\cf(Z)\otimes_{V\times \Abb^1} \cL) = 
c(\cL)^{\dim V}\cap ((c(TV)\cap s(Z,V))\otimes_{V\times \Abb^1} \cL) \\
&= c(\cL)^{\dim V} \cap (c(\cL)^{-\dim V} c(TV\otimes \cL)\cap (s(Z,V)\otimes_{V\times \Abb^1} \cL)) \\
&= c(TV\otimes \cL)\cap s(Z,V)^\cL
\end{align*}
as needed.
\end{proof}

\subsection{Residual intersection}\label{sec:ri}
\begin{itemize}
\item[(iv)] Suppose $Z$ contains a Cartier divisor $D$ in $Y$, and let
  $R$ be the residual scheme to~$D$ in $Z$. Then
\begin{equation}\label{eq:ri}
s(Z,Y)^\cL=s(D,Y)^\cL+s(R,Y)^{\cO(D)\otimes \cL}\quad.
\end{equation}
\end{itemize}

\begin{proof}
This follows from the usual residual intersection formula, i.e.,
\cite[Proposition~9.2]{85k:14004}, in the formulation given
in~\cite[Proposition~3]{MR96d:14004}:
\[
s(Z,Y)=s(D,Y)+c(\cO(D))^{-1}\cap (s(R,Y)\otimes_Y \cO(D))\quad.
\]
This gives
\begin{align*}
s(Z,Y)^\cL &= s(Z,Y)\otimes_{Y\times \Abb^1} \cL\\
&=s(D,Y)\otimes_{Y\times \Abb^1} \cL+c(\cL)^{-1} \frac{c(\cL)}{c(\cO(D)\otimes \cL)}
\cap ((s(R,Y)\otimes_Y \cO(D))\otimes_Y \cL) \\
&\overset{*}=s(D,Y)^\cL + c(\cO(D)\otimes \cL)^{-1} (s(R,Y)\otimes_Y (\cO(D)\otimes \cL)) \\
&=s(D,Y)^\cL + s(R,Y)^{c(\cO(D)\otimes \cL)}
\end{align*}
as stated. Equality $*$ follows from~\cite[Proposition~2]{MR96d:14004}.
\end{proof}

The `additivity' formula~\eqref{eq:ri} will be used in the proof of 
Theorem~\ref{thm:conep}. It may also be obtained as a particular case
of additivity for twisted Segre operators, \cite[Theorem~4.6, (4.7.1)]{MR1393259}.

\subsection{General hyperplane sections}\label{ss:Ghs}
\begin{itemize}
\item[(v)] Suppose $Y\subseteq \Pbb^n$, and let $H$ be a general
  hyperplane. Then
\[
s(Z\cap H, Y\cap H)^\cL = H \cdot s(Z,Y)^\cL\quad.
\]
\end{itemize}

\begin{proof}
More generally, we can prove that if $D$ is a Cartier divisor of $Y$
intersecting properly every component of the normal cone of $Z$ in
$Y$, then
\[
s(D\cap Z, D)^\cL = D\cdot s(Z,Y)^\cL\quad.
\]
Indeed, it is easy to see that this is the case for ordinary Segre
classes (\cite[Lemma~4.1]{MR3415650}), so we only need to verify that
if $\alpha\in A_*Z$ and $D$ is a divisor of $Y$, then for all line
bundles~$\cL$
\[
(D\cdot \alpha)\otimes_{D\times \Abb^1} \cL = D\cdot
(\alpha\otimes_{Y\times \Abb^1} \cL)\quad.
\]
This is immediately checked for a pure-dimensional class by
using~\eqref{eq:tende}, hence it holds for all classes by linearity.
\end{proof}


\section{Intersection product}\label{sec:equiv}
In this section we use tensored Segre classes to give a reformulation
of the Fulton-MacPherson intersection product. This will again be a
formal consequence of the usual formulation of the product, but in
some situations the use of tensored classes yields particularly
simple expressions, cf.~Theorems~\ref{thm:enum} and~\ref{thm:equiv}.

The reformulation relies on the following observation concerning the
$\otimes$ operation used to defined tensored classes.

\begin{lemma}\label{lem:inde}
Let $A$ be a Chow class in a subscheme $Z$ of a pure-dimensional
scheme~$X$, and let $\cL$ be a line bundle on $Z$. Then the term of
dimension $\dim X-c$ in
\[
c(\cL)^{c-1}\cap (A\otimes_X \cL)
\]
equals the term of dimension $\dim X-c$ in $A$ (in particular, it is
independent of $\cL$).
\end{lemma}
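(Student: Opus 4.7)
My plan is to reduce to a homogeneous class by linearity and then carry out a short binomial expansion.

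By linearity of the $\otimes$ operation and of cap products, it suffices to treat the case $A = \alpha_k \in A_kZ$. The pure-dimensional analogue of~\eqref{eq:tende} reads $\alpha_k \otimes_X \cL = c(\cL)^{-(\dim X - k)} \cap \alpha_k$, so that
\[
c(\cL)^{c-1} \cap (\alpha_k \otimes_X \cL) = (1+c_1(\cL))^{i-1} \cap \alpha_k,
\]
where $i := k + c - \dim X$. The component of dimension $\dim X - c$ on the right comes from the term $c_1(\cL)^j$ with $j = k - (\dim X - c) = i$, so its coefficient is $\binom{i-1}{i}$.

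The key arithmetic fact is then the identity $\binom{i-1}{i} = \delta_{i,0}$ for $i \ge 0$: the contribution vanishes except when $i = 0$, i.e., $k = \dim X - c$, in which case it equals exactly $\alpha_k$, matching the dimension $\dim X - c$ component of $A$. When $i < 0$ both sides vanish trivially in dimension $\dim X - c$, since $\alpha_k$ has no component in a dimension greater than $k$ and the expansion of $(1+c_1(\cL))^{i-1}$ involves only nonnegative powers of $c_1(\cL)$. I expect the only delicate point to be getting the exponent $\dim X - k$ in the definition of $\otimes_X$ right; the $\cL$-independence is then a formal consequence of $\binom{i-1}{i}$ being a Kronecker delta rather than a nontrivial binomial.
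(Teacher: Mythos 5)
Your proof is correct and follows essentially the same route as the paper: decompose $A$ into (co)dimension-homogeneous pieces, compute $c(\cL)^{c-1}\cap(A\otimes_X\cL)$ piece by piece, and observe that only the codimension-$c$ piece survives in dimension $\dim X-c$. The only difference is cosmetic --- you make explicit, via $\binom{i-1}{i}=\delta_{i,0}$, the binomial-coefficient vanishing that the paper dismisses with ``it is clear.''
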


\begin{proof}
Letting $A^{(i)}$ denote the part of $A$ of dimension $\dim X - i$,
\begin{align*}
c(\cL)^{c-1}\cap & (A\otimes_X \cL) = c(\cL)^{c-1} \left(\frac{A^{(0)}}{c(\cL)^0} 
+ \frac{A^{(1)}}{c(\cL)^1} + \frac{A^{(2)}}{c(\cL)^2} + \cdots\right) \\
&= c(\cL)^{c-1}\cap A^{(0)} + c(\cL)^{c-2}\cap A^{(1)} + \cdots
+ A^{(c-1)} + c(\cL)^{-1}\cap A^{(c)} + \cdots
\end{align*}
It is clear that the term of dimension $\dim X - c$ in this expression
is $A^{(c)}$, independently of~$\cL$.
\end{proof}

\begin{remark}
If $A=\frac{c(\cE)}{c(\cF)}\cap [X]$, with $\cE$, $\cF$ vector bundles of ranks
$e$, $f$ respectively, then Lemma~\ref{lem:inde} asserts that the term of 
codimension $c=e-f+1$ in $A$ does not change if we tensor both $\cE$ and
$\cF$ by a line bundle $\cL$. Indeed, using~\cite[Proposition~1]{MR96d:14004}
\begin{multline*}
c(\cL)^{c-1}\cap (A\otimes_X \cL) 
= c(\cL)^{e-f}\cap \left(\left(\frac{c(\cE)}{c(\cF)}\cap [X]\right)\otimes \cL\right)\\
= c(\cL)^{e-f}\cap \left(\frac{c(\cE\otimes \cL)}{c(\cL)^{e-f}c(\cF\otimes \cL)} \cap [X]\right)
=\frac{c(\cE\otimes \cL)}{c(\cF\otimes \cL)}\cap [X]\quad.
\end{multline*}
This recovers the result of~\cite{MR1348792}.
\qede\end{remark}

Now we consider a standard intersection template. Let $V$ be a
variety, $B\subseteq V$ a closed subscheme, and assume that the
inclusion $B\hookrightarrow Y$ is a regular embedding. Let $f: Y\to V$
be a morphism, and assume $Z\subseteq Y$ is a collection of connected 
components of
$f^{-1}(B)$. Let $g: Z\to B$ be the induced morphism.
\[
\xymatrix{
Z \ar[r] \ar[d]_g & Y \ar[d]^f \\
B \ar[r]  & V
}
\]

\begin{prop}\label{prop:fminv}
For all line bundles $\cL$ on $Z$, the contribution $(B\cdot Y)_Z$ of $Z$ 
to the Fulton-MacPherson intersection product $B\cdot Y$ is given by
\begin{equation}\label{eq:fminv}
(B\cdot Y)_Z=\{ c(g^* N_BV\otimes \cL) \cap s(Z,Y)^\cL \}_d
\end{equation}
where $\{\cdot \}_d$ denotes the term of dimension $d$, and $d=\dim Y-
\codim_B V$.
\end{prop}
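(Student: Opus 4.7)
My starting point is the standard Fulton--MacPherson formula for the contribution of a component to an intersection product, namely
\[
(B\cdot Y)_Z = \{c(g^* N_BV)\cap s(Z,Y)\}_d
\]
(cf.~\cite[Example~6.1]{85k:14004}). Granted this, the proposition reduces to the purely formal assertion that the dimension-$d$ component is preserved when one simultaneously tensors $N_BV$ and the Segre class by $\cL$; the whole proof will consist in extracting this fact from Lemma~\ref{lem:inde} together with the rules governing the $\otimes$ operation.

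Setting $e := \codim_B V = \rk N_BV$ and $A := c(g^* N_BV)\cap s(Z,Y)$, I would view $A$ as a class on the subscheme $Z$ of the pure-dimensional ambient scheme $X = Y\times\Abb^1$ (so $\dim X = \dim Y + 1$), and apply Lemma~\ref{lem:inde} with codimension $c = e+1$, chosen precisely so that $\dim X - c = \dim Y - e = d$. The lemma then yields
\[
\{A\}_d = \{c(\cL)^{e}\cap (A\otimes_{Y\times\Abb^1}\cL)\}_d .
\]
To finish, I would rewrite the right-hand side using the identity $(c(\cE)\cap\alpha)\otimes \cL = c(\cL)^{-\rk\cE}\, c(\cE\otimes \cL)\cap(\alpha\otimes\cL)$ from \cite[Proposition~1]{MR96d:14004}, applied to $\cE = g^* N_BV$ and $\alpha = s(Z,Y)$ exactly as in the derivation of~\eqref{eq:ful}:
\[
A\otimes_{Y\times \Abb^1}\cL = c(\cL)^{-e}\, c(g^* N_BV\otimes \cL)\cap s(Z,Y)^{\cL} .
\]
The outer factor $c(\cL)^{e}$ cancels $c(\cL)^{-e}$ exactly, giving $c(g^* N_BV\otimes \cL)\cap s(Z,Y)^{\cL}$, and combining with Fulton's formula delivers the stated identity.

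There is no genuine obstacle: the argument is pure bookkeeping. The only point that requires care is the dimension/rank accounting, i.e.\ matching the rank $e$ of $N_BV$ with the codimension $c = e+1$ chosen in Lemma~\ref{lem:inde} so that the factors of $c(\cL)^{\pm e}$ cancel in precisely the dimension-$d$ piece — which is also the reason why the proposition holds for every $\cL$, with no compatibility imposed between $\cL$ and the geometric data $(Z,Y,B,V)$.
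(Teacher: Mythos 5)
Your argument is correct and coincides with the paper's own proof: both start from Fulton's formula $(B\cdot Y)_Z=\{c(g^*N_BV)\cap s(Z,Y)\}_d$, apply Lemma~\ref{lem:inde} with $X=Y\times\Abb^1$ and $c=\dim Y+1-d=\codim_BV+1$, and then use \cite[Proposition~1]{MR96d:14004} to convert $A\otimes_{Y\times\Abb^1}\cL$ into $c(\cL)^{-\codim_BV}c(g^*N_BV\otimes\cL)\cap s(Z,Y)^\cL$, with the $c(\cL)^{\pm\codim_BV}$ factors cancelling. Your dimension/rank bookkeeping matches the paper's exactly, so there is nothing to add.
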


The point of this statement is that the contribution of $Z$ to $B\cdot Y$,
and hence the right-hand-side of~\eqref{eq:fminv}, is
{\em independent of $\cL$;\/} thus, we may have the flexibility of
choosing a specific line bundle to simplify this
expression. Theorem~\ref{thm:equiv} will precisely be obtained in this
fashion.

\begin{proof}
By~\cite[\S6.1]{85k:14004}, 
$(B\cdot Y)_Z=\{ c(g^*N_BV)\cap s(Z,Y)\}_d$. Applying Lemma~\ref{lem:inde} to 
$A=c(g^*N_BV)\cap s(Z,Y)$, $X=Y\times \Abb^1$, and $c=\dim Y+1-d$, we obtain
\begin{align*}
(B\cdot Y)_Z &= \{c(g^* N_B V)\cap s(Z,Y)\}_d \\
&= \{ c(\cL)^{\dim Y-d}\cap ((c(g^*N_B V)\cap s(Z,Y))\otimes_{Y\times \Abb^1} \cL) \}_d \\
&\overset{*}= \{ c(\cL)^{\dim Y-d}c(\cL)^{-\codim_B V}c(g^*N_BV\otimes \cL)\cap 
(s(Z,Y)\otimes_{Y\times \Abb^1} \cL) \}_d \\
&= \{ c(g^*N_BV\otimes \cL)\cap s(Z,Y)^\cL\}_d
\end{align*}
as stated. Equality $*$ holds by \cite[Proposition~1]{MR96d:14004}.
\end{proof}

Next, we verify that Theorem~\ref{thm:equiv} follows from
Proposition~\ref{prop:fminv}.  Let $X_1,\dots, X_m$ be effective
Cartier divisors in a variety $V$; assume $\cO(X_i)$ is independent of
$i$, and let $\cO(X)$ denote this line bundle. We define the
intersection product $X_1\cdots X_m$ by applying the Fulton-MacPherson
definition (\cite[\S6.1]{85k:14004}) to the following Cartesian
diagram:
\[
\xymatrix{
X_1\cap \cdots \cap X_m \ar[r] \ar[d]_i & V \ar[d]^\Delta \\
X_1\times \cdots \times X_m \ar[r] & V\times \cdots \times V
}
\]
where the vertical map $\Delta$ is the diagonal embedding.

Let $Z$ be a union of connected components of the intersection 
$X_1\cap \cdots \cap X_m$. Theorem~\ref{thm:equiv} states that 
the contribution $(X_1\cdots X_m)_Z$ of $Z$ to the intersection 
product $X_1\cdots X_m$ is given by
\[
(X_1\cdots X_m)_Z=\{s(Z,V)^{\cO(-X)}\}_{\dim V-m}\quad.
\]

\begin{proof}[Proof of Theorem~\ref{thm:equiv}]
We have $Z\subseteq X_1\cap \cdots \cap X_m \overset i\hookrightarrow
X_1\times \cdots \times X_m$. Denote by $g$ this inclusion. We have
\[
g^* N_{X_1\times\cdots \times X_m} (V\times \cdots \times V) 
=\bigoplus_j N_{X_j}V|_Z \cong \cO(X)^{\oplus m}|_Z\quad.
\]
It follows that
\[
(g^* N_{X_1\times\cdots \times X_m} (V\times \cdots \times V))\otimes \cO(-X)|_Z \cong
\cO_Z^{\oplus m}\quad,
\]
and hence
\[
c(g^* N_{X_1\times\cdots \times X_m} (V\times \cdots \times V)) = 1\quad.
\]
The statement then follows immediately from Proposition~\ref{prop:fminv}.
\end{proof}

Next, we assume that $Z$ is cut out by a linear system $L\subseteq
H^0(Y,\cL)$ and that $Y$ is projective; let $H$ be the hyperplane
class on $Y$. For a class $\alpha\in A_kY$, we will let $\deg \alpha$
denote $\int_Y H^k\cdot \alpha$, i.e., the degree of the push-forward of
$\alpha$ to projective space. If $X_1, X_2,\dots$ are general elements
of $L$, we are interested in the contribution to $\deg (X_1\cdots
X_c)$ supported on (subvarieties of) $Z$, for all $c\ge 0$.  As mentioned
in~\S\ref{sec:intro}, this situation is motivated by enumerative
geometry: typically, the non-complete variety $Y\smallsetminus Z$ may
parametrize some type of geometric object, and the degree of the part
of $X_1\cdots X_c$ supported on $Y \smallsetminus Z$ will have
enumerative significance. This degree can be obtained by taking the
contribution supported on $Z$ away from the total degree of $X_1\cdots
X_c$; thus, this operation may be viewed as `performing intersection
theory in the non-complete variety $Y\smallsetminus Z$'.

\begin{theorem}\label{thm:ls}
For all $c\ge 0$, the contribution to $\deg (X_1\cdots X_c)$ supported
on $Z$ equals $\deg \{s(Z,Y)^{\cL^\vee}\}_{\dim Y - c}$.
\end{theorem}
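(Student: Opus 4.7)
The plan is to reduce to a zero-dimensional intersection by cutting $Y$ with general hyperplanes, and then invoke Theorem~\ref{thm:equiv} directly. Choose general hyperplane sections $H_1,\dots,H_{\dim Y - c}$ of the projective ambient space, and set
\[
Y' := Y\cap H_1\cap\cdots\cap H_{\dim Y-c},\quad X'_i := X_i\cap Y',\quad Z' := Z\cap Y',\quad \cL' := \cL|_{Y'}.
\]
Then $Y'$ is a $c$-dimensional subvariety of $Y$, and the $X'_i$ are effective Cartier divisors on $Y'$ all belonging to the linear system $|\cL'|$. Iterating the hyperplane-section property (v) from~\S\ref{ss:Ghs},
\[
s(Z',Y')^{(\cL')^\vee} \;=\; H_1\cdots H_{\dim Y-c}\cdot s(Z,Y)^{\cL^\vee},
\]
and extracting the zero-dimensional component gives
\[
\deg\{s(Z',Y')^{(\cL')^\vee}\}_0 \;=\; \deg\{s(Z,Y)^{\cL^\vee}\}_{\dim Y-c}.
\]
Thus the theorem reduces to the claim that the contribution to $\deg(X'_1\cdots X'_c)$ supported on $Z'$ equals $\deg\{s(Z',Y')^{(\cL')^\vee}\}_0$.

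The next step is to verify that $Z'$ is a union of connected components of $W' := X'_1\cap\cdots\cap X'_c$, so that Theorem~\ref{thm:equiv} applies. For sufficiently general $X_i\in L$, the residual locus $R:=\overline{(X_1\cap\cdots\cap X_c)\smallsetminus Z}$ has pure dimension $\dim Y-c$, and by a standard Bertini argument applied to the moving part of the linear system the intersection $R\cap Z$ has dimension at most $\dim Z-c < \dim Y-c$. Intersecting with $\dim Y-c$ further general hyperplanes then forces $R\cap Z\cap H_1\cap\cdots\cap H_{\dim Y-c}=\emptyset$, while $R$ itself is cut down to a finite set $R'$ of reduced points. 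Hence $W'=Z'\sqcup R'$, and $Z'$ is a union of connected components of $W'$ as required.

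Now apply Theorem~\ref{thm:equiv} to the $c$ linearly equivalent divisors $X'_1,\dots,X'_c$ on $V=Y'$: the contribution of $Z'$ to the intersection product equals $\{s(Z',Y')^{(\cL')^\vee}\}_{\dim Y'-c}=\{s(Z',Y')^{(\cL')^\vee}\}_0$. By associativity of the Fulton--MacPherson refined intersection product, the $0$-cycle $X'_1\cdots X'_c$ coincides with $X_1\cdots X_c\cdot H_1\cdots H_{\dim Y-c}$, and by definition of $\deg$ its part supported on $Z'$ is exactly the contribution to $\deg(X_1\cdots X_c)$ supported on $Z$. Combining the two displayed equalities yields the theorem. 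The principal obstacle is the Bertini verification in the middle step that $R$ meets $Z$ in proper codimension so that general hyperplane sections separate them; granted this, the rest is formal manipulation using property~(v), associativity of the refined product, and Theorem~\ref{thm:equiv}.
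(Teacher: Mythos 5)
Your reduction to a $c$-dimensional ambient variety by general hyperplanes and your use of property (v) agree with the paper's argument, but the step ``Hence $W'=Z'\sqcup R'$'' hides a genuine gap: it asserts that the scheme structure of $X'_1\cap\cdots\cap X'_c$ along $Z'$ is exactly $Z'$, and this is false in general. Your Bertini argument only separates supports (the residual points stay away from $Z'$); it says nothing about the scheme structure of the intersection near $Z'$, and $c$ general members of a linear system with base scheme $Z'$ need not cut out $Z'$ scheme-theoretically. For instance, let $Z\subseteq \Pbb^2$ be the first infinitesimal neighborhood of a point $p$ (local ideal $(x,y)^2$, length $3$) and let $L$ be the quartics containing $Z$, with $c=2$: two general members have multiplicity $2$ at $p$ with distinct tangent cones, so they meet at $p$ with multiplicity $4$, and the connected component of $X_1\cap X_2$ supported at $p$ has length $4$, strictly containing $Z$. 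Consequently $Z'$ is in general \emph{not} a union of connected components of $W'$, and Theorem~\ref{thm:equiv} cannot be applied to $Z'$ as you do; it applies only to the possibly larger scheme $Z'_-$, the part of $W'$ supported on $Z'$, and yields the contribution $\{s(Z'_-,Y')^{(\cL')^\vee}\}_0$.

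What is missing is precisely the statement that $s(Z'_-,Y')=s(Z',Y')$, i.e., that finitely many general members of a linear system cutting out $Z'$ produce a scheme with the same Segre class along $Z'$ even though it may be strictly larger than $Z'$. This is not a formal manipulation: it is Theorem~1.1(b) of~\cite{howmany}, and it is exactly the ingredient the paper invokes at this point of its proof. (In the example above the numbers do come out right---the local contribution $4$ equals $\deg\{s(Z,\Pbb^2)^{\cL^\vee}\}_0=4$, not the length $3$---which shows the theorem is rescued at the level of Segre classes, not by any identification of schemes.) The remainder of your argument, namely the passage from $X_1\cdots X_c$ to its hyperplane sections and the computation via property (v), is correct and matches the paper; with the cited result inserted in place of the claim $W'=Z'\sqcup R'$, your proof becomes essentially the paper's.
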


\begin{proof}
Let $H_1,\dots, H_{\dim Y-c}$ be general hyperplanes, and let
\[
Z^{(c)} = H_1\cap \cdots \cap H_{\dim Y-c}\cap Z\quad, \quad
Y^{(c)} = H_1\cap \cdots \cap H_{\dim Y-c}\cap Y\quad;
\]
note that $\dim Y^{(c)} = c$.  The intersections $X_i^{(c)}:=X_i\cap
Y^{(c)}$ are general representative of the restriction of the linear
system $L$ to $Y^{(c)}$; this system cuts out $Z^{(c)}$. We have
\[
\deg (X_1\cdots X_c) = \int_Y (X_1^{(c)}\cdots X_c^{(c)})\quad.
\]
By Theorem~\ref{thm:equiv}, the contribution of $Z^{(c)}$ to
$X_1^{(c)}\cdots X_c^{(c)}$ is given by
\[
\int_Y \{s(Z^{(c)}_-,Y^{(c)})^{\cL^\vee}\}_0\quad,
\]
where $Z^{(c)}_-$ is the part of $X_1^{(c)}\cap\cdots\cap X_c^{(c)}$ 
supported within $Z^{(c)}$.  By \cite[Theorem~1.1(b)]{howmany},
$s(Z^{(c)}_-,Y^{(c)})=s(Z^{(c)},Y^{(c)})$, and hence
$s(Z^{(c)}_-,Y^{(c)})^{\cL^\vee}=s(Z^{(c)},Y^{(c)})^{\cL^\vee}$.
Further, by property (v) of tensored Segre classes (cf.~\S\ref{ss:Ghs}),
\[
s(Z^{(c)},Y^{(c)})^{\cL^\vee} = H^{\dim Y-c} \cdot s(Z,Y)^{\cL^\vee}\quad.
\]
It follows that the contribution to $X_1\cdots X_c$ supported on $Z$
has degree
\[
\int_Y H^{\dim Y-c} \cdot s(Z,Y)^{\cL^\vee}  = \deg \{s(Z,Y)^{\cL^\vee}\}_{\dim Y-c}
\]
as stated.
\end{proof}

Theorem~\ref{thm:enum} is a special case of Theorem~\ref{thm:ls},
where $Y=\Pbb^n$ and $\cL=\cO(d)$.

\begin{example}
For the problem of characteristic numbers of plane conics we have
$Y=\Pbb^5$, $d=2$; and $Z$ consists of the Veronese surface in
$\Pbb^5$ with its reduced structure.  Denoting by $h$ the hyperplane
in $Z\cong \Pbb^2$, the pull-back of $H$ to $Z$ equals $2h$, and we
have
\[
c(N_Z\Pbb^5) = \frac{(1+2h)^6}{(1+h)^3}\quad.
\]
By property (i) of tensored classes (cf.~\S\ref{sec:re}) we have that
\begin{align*}
s(Z,\Pbb^5)^{\cO(-2H)} &= (c(\cO(-4h)) c(N_Z\Pbb^5\otimes \cO(-4h)))^{-1}\cap [Z]
=\frac{(1+h-4h)^3}{(1-4h)(1+2h-4h)^6}\cap [Z]\\
&= \frac{(1-3h)^3}{(1-4h)(1-2h)^6}\cap [Z] =(1+7h+31h^2)\cap [Z]
\end{align*}
and it follows that
\[
\iota_* s(Z,\Pbb^5)^{\cO(-2H)} = 4[\Pbb^2]+14[\Pbb^1]+31[\Pbb^0]\quad.
\]
Using Theorem~\ref{thm:enum}, this says that the characteristic
numbers $N_k$ for smooth plane conics, that is, the number of conics
tangent to $k$ general lines and containing $5-k$ general points, must
be $1,2,2^2, 2^3-4,2^4-14,2^5-31= 1,2,4,4,2,1$ for $k=0,\dots, 5$.
\qede\end{example}

\begin{example}\label{ex:expco}
As mentioned in~\S\ref{sec:intro}, results such as
Theorems~\ref{thm:ls} or~\ref{thm:enum} may be used to compute Segre
classes. For example, consider the monomial scheme $Z$ defined by the
ideal $I=(x_1^2 x_2^6, x_1^3 x_2^4, x_1^4 x_2^3, x_1^5 x_2, x_1^7)$ in
$\Pbb^3$. Let $f_1, f_2, f_3$ be general degree-$8$ polynomials in
$I$, and let
\[
J_1=(f_1):I^\infty\quad, \quad J_2=(f_1,f_2):I^\infty\quad, \quad J_3=(f_1,f_2,f_3):I^\infty\quad.
\]
Macaulay2 can compute these ideals for `random' polynomials $f_i$, and
the degrees of the residual schemes $R_1$, $R_2$, $R_3$ defined by the 
ideals $J_1$, $J_2$, $J_3$:
\begin{equation}\label{eq:res8}
\deg R_1 = 6\quad, \quad \deg R_2 = 14\quad,\quad \deg R_3 = 30\quad.
\end{equation}
According to Theorem~\ref{thm:enum},
\[
\iota_* s(Z,\Pbb^3)^{\cO(-8)} = (8-6)[\Pbb^2]+(8^2-14)[\Pbb^1]+(8^3-30)[\Pbb^0]
=2[\Pbb^2]+50[\Pbb^1]+482[\Pbb^0]\quad.
\]
Letting $H$ denote the hyperplane class, it follows that
\[
\iota_* s(Z,\Pbb^3) = \frac{2[\Pbb^2]}{(1+8H)^2} + \frac{50[\Pbb^1]}{(1+8H)^3} + 
\frac{482[\Pbb^0]}{(1+8H)^4} = 2[\Pbb^2] + 18[\Pbb^1] - 334[\Pbb^0]\quad.
\]
Cf.~\cite[Example 1.2]{Scaiop} for a different computation of the same
class.
We note that by~\eqref{eq:mul}
\begin{align*}
s(Z,\Pbb^3)^{\cO(-d)} &= s(Z,\Pbb^3)^{\cO(-8)}\otimes_{\Pbb^3\times \Abb^1} \cO(8-d) \\
&=\frac{2[\Pbb^2]}{(1+(8-d)H)^2} + \frac{50[\Pbb^1]}{(1+(8-d)H)^3} + 
\frac{482[\Pbb^0]}{(1+(8-d)H)^4}\\
&=2[\Pbb^2]+(4d+18)[\Pbb^1]+(6d^2+54d-334)[\Pbb^0]\quad,
\end{align*}
and by Theorem~\ref{thm:enum} we can deduce that the degrees of the corresponding
residual schemes for degree-$d$ general polynomials in $I$ must be
\[
\deg R'_1 = d-2\quad, \quad \deg R'_2 = d^2-4d-18\quad,\quad \deg R'_3 = 
d^3-6d^2-54d+334\quad.
\]
Macaulay2 can confirm low degree specializations of this formula (e.g., $d=9$) in
this example. Therefore, the residual degrees for any one $d$ ($d=8$ in this case) 
determine the residual degrees for every~$d$.
\qede\end{example}

Corollary~\ref{cor:constraint} follows from Theorem~\ref{thm:enum}
and~\cite[Theorem 12.3]{85k:14004}, which ensures that all
contributions to an intersection product in projective space are
nonnegative: with notation as in~\S\ref{sec:intro}, $d^i-N_i\ge 0$ for
all $i$, therefore the class
\[
\iota_* s(Z,\Pbb^n)^{\cO(-d)} = (d^0-N_0) [\Pbb^n] + (d^1-N_1) [\Pbb^{n-1}] + \cdots + (d^n-N_n)
[\Pbb^0]
\]
is effective. We also note that it follows that
\begin{align*}
(1+dH&)^{n+1}\cap \iota_* s(Z,\Pbb^n) = (1+dH)^{n+1} \left(\iota_* s(Z,\Pbb^n)^{\cO(-dH)}
\otimes_{\Pbb^n\times\Abb^1} \cO(dH)\right) \\
&= (1+dH)^{n+1} \left(\frac{(d^0-N_0) [\Pbb^n]}{1+dH} + \frac{(d^1-N_1) [\Pbb^{n-1}]}{(1+dH)^2} 
+ \cdots + \frac{(d^n-N_n)[\Pbb^0]}{(1+dH)^{n+1}}\right) \\
&= (d^0-N_0)(1+dH)^n[\Pbb^n]+ (d^1-N_1)(1+dH)^{n-1} [\Pbb^{n-1}] + \cdots + (d^n-N_n)[\Pbb^0]
\end{align*}
is also necessarily an effective class.

\begin{remark}\label{rem:Huh}
Further constraints on the degrees of the components of $s(Z,\Pbb^n)^{\cO(-d)}$
follow from Theorem~\ref{thm:enum} and a theorem of June Huh. Specifically, 
assume that $Z$ may be cut out by hypersurfaces of degree $d$ in $\Pbb^n$, and let
\[
s(Z,\Pbb^n)^{\cO(-d)} = a_0 [\Pbb^n]+a_1 [\Pbb^{n-1}] + \cdots + a_n [\Pbb^0]\quad.
\]
Then the numbers $1-a_0$, $d-a_1$, \dots, $d^n-a_n$ form a {\em log-concave
sequence of nonnegative integers with no internal zeros.\/} 
Indeed, if $Z$ may be cut out by hypersurfaces of degree $d$, then the blow-up
of $\Pbb^n$ along $Z$ may be realized as a subvariety of 
$\Pbb^n\times \Pbb(\cO(d)^r)\cong \Pbb^n\times \Pbb^{r-1}$ for some $r$, and
the numbers $N_i$ in Theorem~\ref{thm:enum} may be interpreted as the
multidegrees of the class of this blow-up in $\Pbb^n\times \Pbb^{r-1}$.
These numbers form a log-concave sequence by~\cite[Theorem~21]{MR2904577},
and the statement follows.
\qede\end{remark}


\section{Segre classes of linear joins}\label{sec:cone}
Next we consider Segre classes of linear joins. The following
situation generalizes slightly the one presented in~\S\ref{sec:intro}; 
this generalization has been useful in applications.  Let $V$ be a
variety, and $Z\subseteq Y=V\times \Pbb^n$ a closed subscheme defined
by a section $s$ of $\cE\otimes \cO(d)$, where $\cE$ is (the pull-back
of) a vector bundle defined on $V$. The situation described
in~\S\ref{sec:intro} corresponds to taking $V$ to be a point. For
any $N\ge n$, we embed $\Pbb^n$ in $\Pbb^N$, for example by
$(x_0:\dots:x_n)\mapsto (x_0:\dots:x_n:0:\dots:0)$; we may then define
a subscheme $Z^{(d)}_N$ of $V\times \Pbb^N$ by using `the same'
section $s$, interpreting the $\cO(d)$ components of $s$ as expressed
in the first
$(n+1)$ homogeneous coordinates of $\Pbb^N$. Geometrically, the scheme
$Z^{(d)}_N$ is supported on the join of $Z\subseteq V\times
\Pbb^n\subseteq V\times \Pbb^N$ and $V\times \Pbb^{N-n-1}$, where
$\Pbb^{N-n-1}$ is spanned by the last $N-n$ homogeneous
coordinates. Thus $Z^{(d)}_n=Z$; but note that the scheme structure on
$Z^{(d)}_N$ along the `vertex' $V\times \Pbb^{N-n-1}$ is not
determined by $Z$ alone---it also depends on the choice of $d$
(cf.~Example~\ref{ex:lembp}).

These linear joins also define a map $\alpha \mapsto \alpha \vee
(V\times \Pbb^m)$, ($m=N-n-1$) from $A_*Z$ to~$A_* Z^{(d)}_N$: if $W$
is a subvariety of $Z$, the join $W\vee (V\times \Pbb^m)$ is a
subvariety of $Z^{(d)}_N$.  Theorem~\ref{thm:cone} is the particular
case corresponding to $V=\{pt\}$ of the following statement.

\begin{theorem}\label{thm:conep}
With notation as above, and letting $H$ denote the hyperplane class:
\begin{equation}\label{eq:coneformp}
s(Z^{(d)}_N,V\times \Pbb^N)^{\cO(-dH)} = \frac{d^{n+1}[V\times \Pbb^m]}{1-dH}
+s(Z,\Pbb^n)^{\cO(-dH)}\vee (V\times \Pbb^m)\quad.
\end{equation}
\end{theorem}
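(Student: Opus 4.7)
The plan is to resolve the linear join $Z^{(d)}_N$ by a single blow-up and extract~\eqref{eq:coneformp} by combining the residual-intersection formula~(iv) with the birational- and flat-pullback properties~(ii) of tensored Segre classes.

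First, let $\pi: B \to V\times\Pbb^N$ denote the blow-up of the vertex $V\times\Pbb^m$, with exceptional divisor $E$. The linear projection from the vertex extends to a morphism $p: B\to V\times\Pbb^n$ that realizes $B$ as a $\Pbb^{m+1}$-bundle over $V\times\Pbb^n$. Since a linear form on $\Pbb^N$ vanishes on $\Pbb^m$ exactly when it involves only the first $n+1$ coordinates, these two projections are related by
\[
\pi^*\cO_{\Pbb^N}(1)\cong p^*\cO_{\Pbb^n}(1)\otimes \cO_B(E),
\]
equivalently $\pi^*H=p^*h+E$ as divisor classes on $B$. The crucial consequence, which drives all subsequent simplifications, is
\[
\cO_B(dE)\otimes \pi^*\cO(-dH)\cong p^*\cO(-dh).
\]

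Next, since the section defining $Z^{(d)}_N$ consists of polynomials of degree $d$ in the first $n+1$ coordinates, its pullback to $B$ factors locally as $\sigma_E^d\cdot\Til s$, where $\sigma_E$ cuts out $E$ and $\Til s$ is a section of $\cE\otimes p^*\cO(d)$ with zero scheme $\Til Z := p^{-1}(Z)$. Thus $\pi^{-1}(Z^{(d)}_N)$ contains the Cartier divisor $dE$, with residual scheme (\cite[\S9.2]{85k:14004}) equal to $\Til Z$. Setting $\cL := \cO(-dH)$, property~(iv) gives
\[
s(\pi^{-1}(Z^{(d)}_N),B)^{\pi^*\cL}=s(dE,B)^{\pi^*\cL}+s(\Til Z,B)^{\cO(dE)\otimes\pi^*\cL},
\]
and property~(ii) identifies $\pi_*$ of the left-hand side with $s(Z^{(d)}_N,V\times\Pbb^N)^{\cL}$.

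It remains to compute $\pi_*$ of the two summands. For the residual term, the key identity rewrites $\cO(dE)\otimes\pi^*\cL = p^*\cO(-dh)$, so the flat-pullback case of property~(ii) yields $s(\Til Z,B)^{p^*\cO(-dh)} = p^*\bigl(s(Z,V\times\Pbb^n)^{\cO(-dh)}\bigr)$. Since $\pi_*\circ p^*$ realizes by definition the join map $\alpha\mapsto \alpha\vee(V\times\Pbb^m)$ on Chow groups, this reproduces the second summand of~\eqref{eq:coneformp}. For the vertex term, property~(i) together with the key identity gives
\[
s(dE,B)^{\pi^*\cL}=\frac{d\,[E]}{(1-d\pi^*H)(1-dp^*h)};
\]
the projection formula pulls $(1-dH)^{-1}$ outside $\pi_*$, and under the identification $E\cong V\times\Pbb^m\times\Pbb^n$ (the projectivization of $N_{V\times\Pbb^m}(V\times\Pbb^N)$, trivial up to a line-bundle twist) with $\pi|_E,\,p|_E$ the two product projections, one computes that $\pi_*\bigl((p^*h)^k\cap[E]\bigr)$ vanishes unless $k=n$, in which case it equals $[V\times\Pbb^m]$. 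Only the degree-$n$ term of the geometric series $\sum_k(dp^*h)^k$ survives, producing the first summand $d^{n+1}[V\times\Pbb^m]/(1-dH)$.

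The main obstacle is the careful bookkeeping of line-bundle twists through the $\otimes$ operation: the linchpin is the identity $\cO(dE)\otimes\pi^*\cO(-dH)=p^*\cO(-dh)$, which causes the twist introduced by the residual formula to cancel precisely against the pullback of $\cO(-dH)$, so that both contributions become classes pulled back from $V\times\Pbb^n$---directly in the residual case, and after pushforward through the $\Pbb^n$-fiber of $E$ in the vertex case. With this identity in hand, the geometric pushforwards are routine.
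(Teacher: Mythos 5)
Your proposal is correct and takes essentially the same route as the paper's proof: blow up the vertex $V\times\Pbb^m$, note that $\pi^{-1}(Z^{(d)}_N)=dE\cup p^{-1}(Z)$ scheme-theoretically, apply the residual formula (iv) with $\cL=\cO(-dH)$, and push forward using (ii), with the identity $\pi^*H=p^*h+E$ turning the residual term into the pullback of $s(Z,V\times\Pbb^n)^{\cO(-dH)}$, whose pushforward realizes the join. The only (harmless) difference is in the exceptional term: you apply (i) to $dE$ and push forward via $E\cong V\times\Pbb^m\times\Pbb^n$, while the paper first computes $\pi'_*\,s(dE,\Til V)=d^{n+1}[V\times\Pbb^m]/(1+dH)^{n+1}$ via the Segre class of the vertex and then tensors by $\cO(-dH)$; both yield $d^{n+1}[V\times\Pbb^m]/(1-dH)$.
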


\begin{proof}
We consider the projection $\pp:V\times \Pbb^N \dashrightarrow V\times
\Pbb^n$ with center at $V\times \Pbb^m$; the indeterminacy of $\pp$ is
resolved by blowing up $V\times \Pbb^N$ along $V\times \Pbb^m$.  Let
$\pi: \Til Y \to V\times \Pbb^N$ be this blow-up, $\tilde\pp$ the lift
of $\pp$ to $\Til V$, and let $E$ be the exceptional divisor.
\[
\xymatrix{
E \ar@{^(->}[r] \ar[d]_\rho & \Til V\ar[d]_\pi \ar[drr]^{\tilde\pp} \\
V\times \Pbb^m \ar@{^(->}[r] & V\times \Pbb^N \ar@{-->}[rr]^\pp && V\times \Pbb^n
}
\]
By hypothesis, $Z^{(d)}_N$ is defined by the section $s$ of
$\cE\otimes \cO(d)$ whose zero-scheme is $Z$ in $V\times \Pbb^n$. It
follows that $\pi^{-1}(Z^{(d)}_N) =E\cup \tilde\pp^{-1}(Z)$
set-theoretically; we will refine this statement to a scheme-theoretic
one in a moment.  First, let $\pi'$, resp., $\tilde\pp'$ be the
restrictions of $\pi$, resp., $\tilde\pp$ to $\pi^{-1}(Z^{(d)}_N)$,
and note that
\[
\pi'_* \tilde\pp^{\prime*}([W]) = [W]\vee \Pbb^m
\]
realizes the join operation $A_*Z \to A_* Z^{(d)}_N$.

Let $(x_0:\dots:x_N)$ be homogeneous coordinates in $\Pbb^N$. On open
sets $U$ of a cover of $V$ we may write $s=(F_1,\dots, F_m)$ ($m=\rk
\cE$) with
\[
F_i = F_i(x_0,\dots, x_n)\in \cO_V(U)[x_0,\dots, x_n]
\]
homogeneous polynomials. The center $V\times \Pbb^m$ of the blow-up is
cut out by $x_0,\dots,x_n$, so the ideal of $\pi^{-1}(Z^{(d)}_N)$ in
$\Til V$ is generated over $U$ by
\[
F_i(\eta x_j)=\eta^d F_i(x_j)
\]
in the patch for $\Til V$ obtained by setting one of the $x_j$'s to
$1$ and letting $\eta$ be the coordinate corresponding to the
exceptional divisor. It follows that
\[
\pi^{-1}(Z^{(d)}_N) = dE \cup \tilde\pp^{-1}(Z)
\]
(scheme theoretically). We apply the residual intersection formula
((iv), cf.~\S\ref{sec:ri}) with $D=dE$, $R=\tilde\pp^{-1}(Z)$, and
$\cL=\cO(-dH)$ to obtain
\[
s(\pi^{-1}(Z^{(d)}_N),\Til V)^{\cO(-dH)} = s(dE,\Til V)^{\cO(-dH)} + 
s(\tilde\pp^{-1}(Z),\Til V)^{\cO(d(E-H))}\quad.
\]
By birational invariance ((ii), \S\ref{sec:bi}) and the projection
formula, this yields
\[
s(Z^{(d)}_N,\Pbb^{n+m+1})^{\cO(-dH)} = \pi'_*\left(
s(dE,\Til V)^{\cO(-dH)} + s(\tilde\pp^{-1}(Z),\Til V)^{\cO(d(E-H))}\right)\quad.
\]
We have to understand the push-forward by $\pi'_*$ of the two terms on
the right-hand side.

Concerning the first term, we have $s(dE,\Til V) = \frac{d[E]}{1+dE}$,
and $\pi'$ restricts to $\rho$ on $E$. We have
\[
\rho_* \frac {[E]}{1+E} = s(V\times \Pbb^m,V\times \Pbb^{n+m+1}) = \frac{[V\times \Pbb^m]}{(1+H)^{n+1}}
\]
by the birational invariance of Segre classes and the fact that $V\times \Pbb^m$ is 
regularly embedded in $V\times \Pbb^N$, with normal bundle $\cO(H)^{\oplus (n+1)}$. Hence
\[
\rho_* \frac {d[E]}{1+dE} =  \frac{d^{n+1}[V\times \Pbb^m]}{(1+dH)^{n+1}}\quad.
\]
It follows that
\[
\pi'_* s(dE,\Til V)^{\cO(-dH)} = \left(\frac{d^{n+1}[V\times \Pbb^m]}{(1+dH)^{n+1}}\right)^{\cO(-dH)}
=\frac{d^{n+1}[V\times \Pbb^m]}{1-dH}
\]
as a class in $A_*Z^{(d)}_N$. (Here we have again
used~\cite[Proposition~1]{MR96d:14004}.)  As for the other term, since
$\tilde\pp$ is flat, then (\S\ref{sec:bi})
\[
s(\tilde\pp^{-1}(Z),\Til V)^{\cO(d(H-E))}
=\left(\tilde\pp^{\prime*} s(Z,V\times \Pbb^n)\right)^{\cO(d(H-E))}\quad;
\]
and since $H-E$ is the pull-back of the hyperplane class from $\Pbb^n$
(which we also denote by~$H$), we get
\[
s(\tilde\pp^{-1}(Z),\Til V)^{\cO(d(E-H))}=\tilde\pp^{\prime*} (s(Z,V\times \Pbb^n)^{\cO(-dH)})\quad.
\]
It follows that
\[
\pi'_* s(\tilde\pp^{-1}(Z),\Til V)^{\cO(d(E-H))}=\pi'_*\tilde\pp^{\prime*} (s(Z,V\times \Pbb^n)^{\cO(-dH)})
=s(Z,V\times \Pbb^n)^{\cO(-dH)} \vee (V\times \Pbb^m)
\]
and this concludes the proof.
\end{proof}

For $d=1$, \eqref{eq:coneformp} reproduces Lemma~4.2
in~\cite{MR3383478}, which was stated and used without proof in that
reference. Theorem~\ref{thm:cone} follows from
Theorem~\ref{thm:conep}, by letting $V=$ a point. In this case,
the information of $s$ consists of $m=\rk E$ homogeneous polynomials
$F_1,\dots, F_m\in k[x_0,\dots, x_n]$ of the same degree $d$.

\begin{example}\label{ex:nscon}
Let $Z$ be a nonsingular conic in $\Pbb^2$; then $Z^{(d)}_3$ is
supported on a quadric cone in $\Pbb^3$. By Theorem~\ref{thm:cone},
\[
s(Z^{(d)}_3,\Pbb^3)^{\cO(-dH)} = d^3[\Pbb^0]+s(Z,\Pbb^2)^{\cO(-dH)}\wedge \Pbb^0\quad.
\]
We have
\[
s(Z,\Pbb^2)^{\cO(-dH)} = (1-dH)^{-1} (1+(2-d)H)^{-1} \cap [Z] = [Z]+(2d-2)H\cdot [Z]
=[Z]+(4d-4)[\Pbb^0]
\]
and therefore
\[
s(Z^{(d)}_3,\Pbb^3)^{\cO(-dH)} = 2[\Pbb^2]+(4d-4)[\Pbb^1]+d^3[\Pbb^0]
\]
after push-forward to $\Pbb^3$.
The ordinary Segre class is immediately obtained from this:
\[
s(Z^{(d)}_3,\Pbb^3) = \frac{2[\Pbb^2]}{(1+dH)^2} + \frac{(4d-4)[\Pbb^1]}{(1+dH)^3}
+ \frac{d^3[\Pbb^0]}{(1+dH)^4} 
=2[\Pbb^2]-4[\Pbb^1] +d(d^2-6d+12)[\Pbb^0]
\]
after push-forward to $\Pbb^3$. The case $d=2$ corresponds to the {\em
  reduced\/} quadric cone in $\Pbb^3$.  
\qede\end{example}

In the rest of the section we will focus on the simpler
Theorem~\ref{thm:cone}.  In our view, the most interesting feature of
this statement is that the shape of the expression~\eqref{eq:coneform}
is {\em independent\/} of $N\ge n$; thus it can be taken as an
invariant of the ideal $I=(F_1,\dots, F_m)$. In terms of ordinary
Segre classes, this observation takes the following form.

\begin{theorem}\label{thm:rf}
With notation as above, let $\iota_N: Z^{(d)}_N\hookrightarrow \Pbb^N$
be the inclusion.  Then
\begin{equation}\label{eq:rf}
\iota_{N*} s(Z^{(d)}_N,\Pbb^N) = \frac{A(H)}{(1+dH)^{n+1}} \cap [\Pbb^N]
\end{equation}
where $A(H)$ is a polynomial of degree $n+1$ with nonnegative coefficients,
independent of~$N$.
\end{theorem}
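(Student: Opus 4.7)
The plan is to apply Theorem~\ref{thm:cone}, push forward to $\Pbb^N$, and then untwist by $\cO(dH)$ via the $\otimes$ operation, crucially exploiting the truncation $H^{N+1}=0$ in $A_*\Pbb^N$ to eliminate any $N$-dependence.

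First I would push forward the identity of Theorem~\ref{thm:cone} to $\Pbb^N$. Writing $\iota_{n*}s(Z,\Pbb^n)^{\cO(-dH)}=\sum_{i=0}^n b_i[\Pbb^{n-i}]$, the join contributes $\sum_{i=0}^n b_i[\Pbb^{N-i}]$, while the first term $\frac{d^{n+1}[\Pbb^m]}{1-dH}$ expands as $\sum_{j=0}^m d^{n+1+j}[\Pbb^{m-j}]$; these pieces live in complementary codimension ranges, so
\[
\iota_{N*}s(Z^{(d)}_N,\Pbb^N)^{\cO(-dH)}=\sum_{i=0}^N a_i[\Pbb^{N-i}],\qquad a_i=\begin{cases} b_i & 0\le i\le n\\ d^i & n<i\le N.\end{cases}
\]
Corollary~\ref{cor:constraint} applied to $Z\subseteq\Pbb^n$ gives $b_i\ge 0$, hence $a_i\ge 0$ throughout.

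Next, by~\eqref{eq:mul} we have $s(Z^{(d)}_N,\Pbb^N)=s(Z^{(d)}_N,\Pbb^N)^{\cO(-dH)}\otimes_{\Pbb^N\times\Abb^1}\cO(dH)$; since $\otimes$ commutes with the proper pushforward $\iota_{N*}$ and $[\Pbb^{N-i}]=H^i\cap[\Pbb^N]$, the definition of $\otimes$ yields
\[
\iota_{N*}s(Z^{(d)}_N,\Pbb^N)=\left(\sum_{i=0}^n \frac{b_i H^i}{(1+dH)^{i+1}}+\sum_{i=n+1}^N \frac{d^iH^i}{(1+dH)^{i+1}}\right)\cap[\Pbb^N].
\]
The second sum is the main point. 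Factoring out $\frac{d^{n+1}H^{n+1}}{(1+dH)^{n+2}}$ and summing the geometric series $\sum_{j=0}^m \bigl(\tfrac{dH}{1+dH}\bigr)^j=(1+dH)-\tfrac{d^{m+1}H^{m+1}}{(1+dH)^m}$ gives
\[
\sum_{i=n+1}^N \frac{d^iH^i}{(1+dH)^{i+1}}=\frac{d^{n+1}H^{n+1}}{(1+dH)^{n+1}}-\frac{d^{N+1}H^{N+1}}{(1+dH)^{N+1}},
\]
and the second term is killed by $[\Pbb^N]$ since $H^{N+1}\cap[\Pbb^N]=0$. Combining and clearing denominators produces
\[
A(H)=\sum_{i=0}^n b_i H^i(1+dH)^{n-i}+d^{n+1}H^{n+1},
\]
a polynomial of degree exactly $n+1$ (with leading coefficient $d^{n+1}$) whose coefficients are nonnegative combinations of $b_i$, $d$, and binomial coefficients, and which manifestly depends only on $Z\subseteq\Pbb^n$ and $d$, not on $N$.

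The only potential obstacle is the bookkeeping in the geometric-series collapse: one must verify that the exponents $n+1+m+1$ and $n+2+m$ both equal $N+1$ so that the remainder is exactly $d^{N+1}H^{N+1}/(1+dH)^{N+1}$ and is annihilated by capping with $[\Pbb^N]$. Once this cancellation is in hand, the independence from $N$ and the nonnegativity of $A(H)$ are immediate.
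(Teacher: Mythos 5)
Your argument is correct and follows essentially the same route as the paper's proof: push forward the formula of Theorem~\ref{thm:cone} to $\Pbb^N$, invoke Corollary~\ref{cor:constraint} for nonnegativity of the coefficients of $\iota_{n*}s(Z,\Pbb^n)^{\cO(-dH)}$, untwist by $\cO(dH)$ via~\eqref{eq:mul}, and read off $A(H)=\sum_{i=0}^n b_iH^i(1+dH)^{n-i}+d^{n+1}H^{n+1}$, which is exactly the numerator in~\eqref{eq:expli}. The only difference is cosmetic: you carry out the finite geometric series and note that the remainder $d^{N+1}H^{N+1}/(1+dH)^{N+1}$ is killed by $H^{N+1}\cap[\Pbb^N]=0$, whereas the paper performs the same collapse formally with the infinite series $\tfrac{d^{n+1}H^{n+1}}{1-dH}\mapsto\tfrac{d^{n+1}H^{n+1}}{(1+dH)^{n+1}}$.
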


\begin{proof}
We push forward the class to $\Pbb^N$, and write it
`cohomologically'. So
\[
\iota_{N*} s(Z,\Pbb^n)^{\cO(-dH)}=a_0 + a_1 H+\cdots +a_n H^n
\]
is a shorthand for the class $(a_0 + a_1 H+\cdots +a_n H^n)\cap
[\Pbb^n] =a_0 [\Pbb^n] + a_1 [\Pbb^{n-1}] + \cdots + a_n
[\Pbb^0]$. This class is effective (by
Corollary~\ref{cor:constraint}), so the coefficients $a_i$ are all
nonnegative.  By Theorem~\ref{thm:cone},
\[
\iota_{N*} s(Z^{(d)}_N,\Pbb^N)^{\cO(-dH)} = a_0 + a_1 H+\cdots +a_n H^n + 
\frac{d^{n+1}H^{n+1}}{1-dH}\quad.
\]
The ordinary Segre class is obtained by tensoring by $\cO(dH)$:
\[
\iota_{N*} s(Z^{(d)}_N,\Pbb^N) = \frac{a_0}{1+dH} + \cdots + \frac{a_n H^n}{(1+dH)^{n+1}}
+\frac{d^{n+1} H^{n+1}}{(1+dH)^{n+1}}
\]
that is,
\begin{equation}\label{eq:expli}
\iota_{N*} s(Z^{(d)}_N,\Pbb^N) =\frac{a_0 (1+dH)^n + a_1 H(1+dH)^{n-1} + \cdots 
+ a_n H^n + d^{n+1} H^{n+1}}{(1+dH)^{n+1}}
\end{equation}
and this verifies the statement.
\end{proof}

\begin{remark}\label{rem:PIint}
The polynomial $A(H)$ has the following interpretation. Let
$S_Z(H)\in \Zbb[H]$ be the polynomial of degree $\le n$ such that
\[
\iota_* s(Z,\Pbb^n)=S_Z(H) \cap [\Pbb^n]\quad.
\]
Then
\begin{equation}\label{eq:num}
A(H) = \left[ (1+dH)^{n+1} S_Z(H) \right]_n + d^{n+1} H^{n+1}\quad,
\end{equation}
where $[\cdot ]_n$ denotes truncation to $H^n$ of the polynomial
within $[\cdot]$.  (This is obtained from the numerator
of~\eqref{eq:expli} by a computation analogous to the one presented at
the end of~\S\ref{sec:equiv}.) Thus, $A(H)-d^{n+1} H^{n+1}$ is the
unique polynomial of degree $\le n$ such that
\[
(A(H)-d^{n+1} H^{n+1})\cap [\Pbb^n] = (1+dH)^{n+1}\cap \iota_* s(Z,\Pbb^n)\quad.
\]
As observed at the end of~\S\ref{sec:equiv}, this is an effective
class; and indeed $A(H)$ has nonnegative coefficients as proven in
Theorem~\ref{thm:rf}.  Also note that $S_Z(H)$ is determined by $Z$ as
a subscheme of $\Pbb^n$, while $Z^{(d)}_N$ depends on the choice of
degree $d$ for generators of an ideal defining
$Z$. Expressions~\eqref{eq:rf} and~\eqref{eq:num} clarify the
dependence of the Segre class $s(Z^{(d)}_N, \Pbb^N)$ on the scheme $Z$
and the choice of $d$.
\end{remark}

\begin{example}
For the conic in Example~\ref{ex:nscon}, $A(H)=2H+(6d-4)H^2+d^3H^3$.
\qede\end{example}

\begin{remark}
The class $\iota_{N*} s(Z^{(d)},\Pbb^N)$ (for $N\gg 0$) is an
invariant determined by the homogeneous ideal $I=(F_1,\dots, F_m)$
chosen to define $Z$ scheme-theoretically in $\Pbb^n$, subject to the
condition that $\deg F_i = d$ for all $i$.  By Theorem~\ref{thm:rf},
this invariant of $I$ may be interpreted as the result of setting $t=$
hyperplane class $H$ in a well-defined rational function $\zeta_I(t)$
with a single pole at $-1/d$ of order $\le (n+1)$, and numerator of
degree $\le (n+1)$ and with nonnegative coefficients.

Such a `zeta function' $\zeta_I(t)$ can be defined for any homogeneous
ideal $I\subseteq k[x_0,\dots, x_n]$, and we will prove elsewhere that
the essential features verified here for ideals generated by
polynomials of a fixed degree hold in general:
\begin{itemize}
\item $\zeta_I(t)$ is rational;
\item The poles of $\zeta_I(t)$ are at $-1/d_i$, where $d_i$ are
  degrees of polynomials in any generating set for $I$.
\item The numerator of $\zeta_I(t)$ is a polynomial of degree
equal to the degree of the denominator, with nonnegative
coefficients, and with known leading term.
\end{itemize}
In general, not all generators of $I$ will contribute poles to
$\zeta_I(t)$.  It would be very worthwhile providing a complete description 
of the poles of $\zeta_I(t)$ and an effective interpretation of the numerator 
of this function.  At present, such
descriptions are available for the case studied in this note (as
discussed in Remark~\ref{rem:PIint}) and for ideals generated by
monomials, where the information can be obtained from an associated
Newton polytope.  
\qede\end{remark}


\end{document}